\newcommand{\eps}{\varepsilon}
\newcommand{\N}{{\mathbb N}}
\newcommand{\C}{{\mathbb C}}
\newcommand{\R}{{\mathbb R}}
\newcommand{\tef}{transcendental entire function}
\newcommand{\sw}{spider's web}
\newcommand\qfor{\quad\text{for }}
\def\Real{\operatorname{Re}}
\def\Imag{\operatorname{Im}}
\def\deg{\operatorname{deg}}
\theoremstyle{plain}
\newtheorem{theorem}{Theorem}[section]
\newtheorem{corollary}[theorem]{Corollary}
\newtheorem{conjecture}[theorem]{Conjecture}
\newtheorem*{theorem*}{Theorem}
\newtheorem*{proposition*}{Proposition}
\newtheorem{lemma}[theorem]{Lemma}
\theoremstyle{definition}
\theoremstyle{remark}
\newtheorem*{remark*}{Remark}
\newtheorem*{remarks*}{Remarks}
\theoremstyle{problem}
\theoremstyle{example}
\newtheorem*{example*}{Example}
\theoremstyle{question}
\theoremstyle{questions}
\newtheorem*{questions*}{Questions}
\begin{document}
%Topmatter

%Two authors

\title[Eremenko's conjecture for functions with real zeros]{Eremenko's conjecture for functions with real zeros: the role of the minimum modulus}

\author{}
\address{}
\email{}

\author{D. A. Nicks}
\address{School of Mathematical Sciences\\
The University of Nottingham\\
University Park\\
Nottingham NG7 2RD\\
UK}
\email{Dan.Nicks@nottingham.ac.uk}

\author{P. J. Rippon}
\address{School of Mathematics and Statistics \\
The Open University \\
   Walton Hall\\
   Milton Keynes MK7 6AA\\
   UK}
\email{Phil.Rippon@open.ac.uk}

\author{G. M. Stallard}
\address{School of Mathematics and Statistics \\
The Open University \\
   Walton Hall\\
   Milton Keynes MK7 6AA\\
   UK}
\email{Gwyneth.Stallard@open.ac.uk}

\thanks{2010 {\it Mathematics Subject Classification.}\; Primary 37F10, 30D20, 30D05.\\The last two authors were supported by the EPSRC grant EP/H006591/1.\\ {\it Keywords}: transcendental entire function, escaping set, Eremenko's conjecture, spider's web, minimum modulus, order, genus, deficiency.}

%End topmatter

\begin{abstract}
We show that for many families of {\tef}s~$f$ the property that $m^n(r)\to\infty$ as $n\to \infty$, for some $r>0$, where $m(r)=\min \{|f(z)|:|z|=r\}$, implies that the escaping set $I(f)$ of~$f$ has the structure of a spider's web. In particular, in this situation $I(f)$ is connected, so Eremenko's conjecture holds. We also give new examples of families of functions for which this iterated minimum modulus condition holds and new families for which it does not hold.
\end{abstract}
\maketitle

\section{Introduction}
\label{Intro}
\setcounter{equation}{0}

Let~$f$ be a {\tef} and denote by $f^{n},\,n=0,1,2,\ldots\,$, the $n$th iterate of~$f$. The {\it escaping set}
\[
I(f) = \{z: f^n(z) \to \infty \mbox{ as } n \to \infty\}
\]
plays a key role in complex dynamics with much recent work motivated by Eremenko's conjecture that all the components of the escaping set are unbounded. This work has led to a much better understanding of the structure of~$I(f)$.

It is known that for many families of transcendental entire functions, including the exponential family, $I(f)$ has the structure of a {\it Cantor bouquet} consisting of uncountably many unbounded curves -- see, for example, \cite{RRRS}. We have shown, however (for example in~\cite{RS10a}), that there are many families of functions for which $I(f)$ has the structure of an infinite {\it spider's web}; that is, $I(f)$ is connected and there exist bounded simply connected domains $G_n, n\in \N$, such that
\[
G_n\subset G_{n+1}\;\text{and}\; \partial G_n \subset I(f),\;\text{for } n\in\N, \quad \text{and}\quad \bigcup_{n=1}^{\infty}G_n=\C.
\]
If $I(f)$ is a Cantor bouquet or a spider's web, then Eremenko's conjecture holds.

Many of the major results on the structure of $I(f)$ have been obtained by studying the {\it fast escaping set} $A(f)$, introduced in \cite{BH99}, which can be defined as follows; see \cite{RS10a}. First put
\begin{equation}\label{ARfdef}
A_R(f) = \{z:|f^n(z)| \geq M^n(R), \text{ for } n \in \N\},
\end{equation}
where $M(r)=M(r,f)=\max \{|f(z)|:|z|=r\},\;r>0$, $M^n(r)=M^n(r,f)$ denotes the $n$th iterate of $r\mapsto M(r,f)$, and $R>0$ is so large that $M(r)>r$ for $r\ge R$, and then put
\[
A(f)=\{z: {\rm for\;some}\; \ell\in \N, f^{\ell}(z)\in A_R(f)\}.
\]

More recently we have considered the set of points whose iterates grow faster than the iterates of the {\it minimum} modulus $m(r)$, for some $r>0$, where
\[
m(r)=m(r,f)=\min \{|f(z)|:|z|=r\}.
\]
In \cite{ORS17} we showed that, if there exist $r>R>0$ such that
\begin{equation}\label{mandM}
m^n(r) > M^n(R) \mbox{ and } M^n(R) \to \infty \mbox{ as } n \to \infty,
\end{equation}
then $A_R(f)$ is a spider's web and hence $I(f)$ is a spider's web.

In~\cite{ORS17} we asked whether $I(f)$ is a spider's web whenever the much weaker condition is satisfied that there exists $r>0$ such that $m^n(r) \to \infty$ as $n \to \infty$. Here we give a positive answer to this question in the case that $f$ is a real function of finite order with only real zeros. Recall that the {\it order} $\rho(f)$ of a {\tef}~$f$ is
\[
\rho(f)=\limsup_{r\to\infty}\frac{\log\log M(r,f)}{\log r}
\]
and~$f$ is said to be {\it real} if
\[
f(\bar z)=\overline{f(z)},\qfor z\in\C.
\]
The main result of the paper gives a new condition for Eremenko's conjecture to hold.

\begin{theorem}\label{SW}
Let $f$ be a real transcendental entire function of finite order with only real zeros, and suppose
\begin{equation}\label{minmodprop}
\text{there exists } r>0 \text{ such that } m^n(r) \to \infty \text{ as } n \to \infty.
\end{equation}
Then $I(f)$ is connected and, moreover, is a spider's web.
\end{theorem}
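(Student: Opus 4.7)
The overall plan is to upgrade the hypothesis~(\ref{minmodprop}) to the stronger condition~(\ref{mandM}), after which the \sw{} structure of $A_R(f)$, and hence of $I(f)$, follows from the result of \cite{ORS17} that is recalled just before the theorem. Concretely, I want to produce $r',R'>0$ such that $m^n(r')>M^n(R')$ holds for every $n\in\N$.

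The extra leverage that the hypotheses of realness, real zeros, and finite order should provide is a strong comparison between $m(r)$ and $M(r)$. I would aim to establish, for some $\alpha>0$ and some unbounded set $E\subset(0,\infty)$, an inequality of the form
\[
\log m(r)\geq\alpha\log M(r) \qfor r\in E
\]
(or some related quantitative version, such as $m(r)\geq M(r/2)$ on~$E$). Such an estimate is plausible from the Hadamard factorisation $f(z)=cz^ke^{P(z)}\prod_n(1-z/a_n)E_p(z/a_n)$, in which $a_n\in\R$ and $P$ is a real polynomial of degree at most~$\lfloor\rho(f)\rfloor$: since every zero lies on the real axis, $|f|$ can only be driven small on the circle $|z|=r$ within short arcs about its intersections with~$\R$, while elsewhere---for example on the imaginary axis, where $|f(iy)|^2$ factorises in a way that foregrounds the manifestly large product $\prod_n(1+y^2/a_n^2)$ modulated by an exponential of a polynomial in $y$---$|f|$ is large and comparable to $M(|z|)$. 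Classical tools (a $\cos\pi\rho$-style argument for small order, or an indicator-function / Phragm\'en--Lindel\"of analysis in sectors bounded away from~$\R$ for larger order) should then yield the desired comparison on a sufficiently rich set~$E$.

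With such a comparison in hand, the iteration is short: starting from the $r$ given by~(\ref{minmodprop}), the orbit $r_n=m^n(r)$ tends to infinity, so infinitely many $r_n$ fall in~$E$; for these, $r_{n+1}=m(r_n)$ is comparable to $M(r_n)$ in the strong sense above, and since $f$ is transcendental, $M$ grows faster than any power, so iterating outpaces $M^n(R)$ for a suitably chosen fixed~$R$, giving (\ref{mandM}) after shifting the start point. The hardest step is the key comparison on~$E$, and in particular the requirement that the dynamical orbit $(m^n(r))$ revisit $E$ often enough rather than get trapped in the sparse neighbourhoods of radii where $m$ is small because $|z|=r$ passes close to a real zero of~$f$. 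The combination of real zeros (confining the bad radii to a one-dimensional locus) and finite order (controlling the density of those zeros) should be precisely what makes this feasible.
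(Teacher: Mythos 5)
There is a genuine gap, and it is structural rather than technical: the intermediate statement you aim for is false for some of the functions covered by the theorem. Your plan is to upgrade \eqref{minmodprop} to \eqref{mandM} (via a comparison $\log m(r)\ge\alpha\log M(r)$ on an unbounded set, propagated along the orbit $m^n(r)$), and then invoke the result of \cite{ORS17} that \eqref{mandM} makes $A_R(f)$ a spider's web. But there exist real entire functions of order $0$ with only negative (hence real) zeros which satisfy \eqref{minmodprop} yet do \emph{not} satisfy \eqref{mandM}; indeed for these functions $A_R(f)$ is \emph{not} a spider's web (see \cite{RS13b}, and the remarks following Theorem~\ref{SW}). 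Since \eqref{mandM} would force $A_R(f)$ to be a spider's web, no argument can recover \eqref{mandM} from the hypotheses of Theorem~\ref{SW}, and more generally any route that reaches the conclusion through $A_R(f)$ being a spider's web is blocked: the theorem asserts a property of $I(f)$ that is strictly weaker than, and for these examples strictly different from, the fast-escaping spider's web structure. The difficulty you flag as the ``hardest step'' (making the orbit revisit the good set $E$ and outpace $M^n(R)$ at \emph{every} step) is therefore not merely hard; for the order-zero examples above it is impossible, because a $\cos\pi\rho$-type comparison on a set of large density cannot prevent the orbit from passing through radii where $m$ is small, and \eqref{mandM} demands domination of $M^n(R)$ at all times $n$.

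The paper's proof avoids this entirely by working with a weaker, minimum-modulus-calibrated escape rate. It splits according to the genus: for genus at least $2$ it shows (Theorem~\ref{main2}) that $m(r)\to0$, so \eqref{minmodprop} is vacuous there; for genus at most $1$ it proves (Theorem~\ref{genus1theorem}) that $I(f)$ contains a spider's web inside the set $\{z:|f^n(z)|\ge M^{-1}(m^n(r)^{1/3})\text{ for all }n\}$, by a contradiction argument on an unbounded curve in the complement, using the winding estimate of \cite{NRS} (Corollary~\ref{winding}), Lemma~\ref{Ifan}, and Lemma~\ref{AR-component}, together with $I(f)\supset$ spider's web $\Rightarrow I(f)$ is a spider's web. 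If you want to salvage your outline, you would need to replace the target \eqref{mandM} by a rate of escape tied to $m^n(r)$ itself rather than to $M^n(R)$, which is precisely what the paper does.
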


{\it Remarks.} 1. All functions of order less than 1/2 have the property~\eqref{minmodprop}, along with several other families of functions (see~\cite[Theorem~1.1]{ORS17}), but many of these functions do not satisfy \eqref{mandM}. In particular, there are functions of order less than~1/2 that satisfy the hypotheses of Theorem~\ref{SW} but do not satisfy \eqref{mandM}; indeed, such examples can even be of order~0 (see~\cite{RS13b}).

2. Many previously known examples for which $I(f)$ is a spider's web have the stronger property that $A_R(f)$ is a spider's web (see \cite{RS10a}). But there are also examples for which this is not the case. For example, in~\cite{RS13c}, we showed that $I(f)$ is a spider's web for real functions of order less than 1/2 whose zeros are all on the negative real axis; the examples of functions of order~0 mentioned above lie in this class and for these functions $A_R(f)$ is {\it not} a spider's web (see~\cite{RS13b}). More recently, Evdoridou~\cite{V} showed that $I(f)$ is a spider's web but $A_R(f)$ is not a spider's web for Fatou's function $f(z) = z + 1 + e^{-z}$.

The proof of Theorem~\ref{SW} is in two parts, depending on the genus of the function, a concept to be defined shortly which is closely related to the order of the function. In Section~2 we prove Theorem~\ref{SW} for all functions satisfying the hypotheses of the theorem with genus at most 1. In fact, we show that, for such functions, $I(f)$ contains a spider's web of points that escape to infinity at a uniform rate related to the minimum modulus. In particular, as we show in Section~3, for real functions of order less than 1/2 with real zeros, a certain subset of $I(f)$ called the quite fast escaping set, $Q(f)$, contains a spider's web.

The first part of the proof of Theorem~\ref{SW} uses a new approach that was introduced in \cite[Theorem 1.1]{RS13c} to prove that $I(f)$ is a spider's web for any real function of order less than~$1/2$ with only negative zeros. The structure of the proof here is similar to that in \cite{RS13c}, though the techniques used within the proof are substantially different. Here the iterated minimum modulus plays a key role, together with new results on the winding of image curves that we obtained in~\cite[Theorem 1.1]{NRS} in order to prove Baker's conjecture for real functions with real zeros.

In Section~4, we use results from the value distribution theory of entire functions to show that, if~$f$ is a transcendental entire function of finite order with only real zeros and of genus at least~2, then there is {\it no} value of~$r$ for which $m^n(r) \to \infty$ as $n\to \infty$, and so the hypotheses of Theorem~\ref{SW} cannot be satisfied. In fact we prove the stronger result that, for such functions, there exists a ray from~0 on which $f(z) \to 0$ as $z \to \infty$. This result, which is of independent interest, completes the proof of Theorem~\ref{SW}.

The genus of a function is defined as follows. Recall that any finite order entire function has a Hadamard representation
\begin{equation}\label{Hadamard}
f(z) =  z^n e^{Q(z)} \prod_{k=1}^\infty E(z/a_k, m),
\end{equation}
where~$Q$ is a polynomial, the $a_k$ are the non-zero zeros of $f$, the Weierstrass primary factors are
\[
E(z,0)=1-z\quad\text{and}\quad E(z,m)=(1-z)e^{z+\frac{z^2}{2}+\cdots+\frac{z^m}{m}},\;\;m\ge 1,
\]
and $m$ is the smallest integer for which $\sum_{k=1}^\infty |a_k|^{-(m+1)}$ is convergent. The \emph{genus} of~$f$ is
\begin{equation}\label{genus}
\max\{ m, \deg Q\}.
\end{equation}
The genus is thus an integer and it satisfies the inequalities
\[
\rho(f) -1 \le \text{genus of }f \le \rho(f);
\]
see \cite[pp. 24--29]{MF} or \cite[pp. 250--253]{T}, for example.

In Section~\ref{1/2to2}, we study real entire functions with only real zeros, and of genus less than~2 and order at least $1/2$. We construct examples of such functions for which property \eqref{minmodprop} holds and examples for which it does not hold.

A summary of our results relating the property \eqref{minmodprop} to the growth of real {\tef}s with only real zeros is given in the following theorem.

\begin{theorem}\label{3cases}
\begin{itemize}
\item[(a)]
Let~$f$ be a real {\tef} of finite order with only real zeros.
\begin{itemize}
\item[(i)]
If $0\le \rho(f)<1/2$, then property \eqref{minmodprop} always holds.
\item[(ii)]
If the genus of~$f$ is at least~2 (which includes the case that $\rho(f)>2$), then property \eqref{minmodprop} never holds.
\end{itemize}
\item[(b)]
For any $\rho\in [1/2,2]$ there are examples of real {\tef}s with real zeros that have order~$\rho$ for which property \eqref{minmodprop} holds and also examples of such functions for which \eqref{minmodprop} does not hold.
\end{itemize}
\end{theorem}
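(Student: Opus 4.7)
The three assertions use quite different ideas, so I treat them in turn. Part~(a)(i) needs no new argument: the statement that every transcendental entire function of order strictly less than $1/2$ satisfies \eqref{minmodprop} is already [ORS17, Theorem~1.1], cited in Remark~1 above, and rests on the classical $\cos\pi\rho$ theorem, which produces radii along which $\log m(r)/\log M(r)$ is bounded below by a positive constant; iteration of $m$ from a suitable starting point then escapes.

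For part~(a)(ii), the plan (to be carried out in Section~4) is to establish the stronger geometric claim, already announced in the introduction, that under the hypotheses there exists $\theta_0\notin\pi\Z$ with $f(te^{i\theta_0})\to 0$ as $t\to\infty$. Once this ray-decay result is in hand the conclusion is straightforward: since $te^{i\theta_0}$ lies on the circle $|z|=t$, one has $m(t)\leq |f(te^{i\theta_0})|\to 0$ as $t\to\infty$, so if $m^n(r)\to\infty$ for some $r>0$, then for large $n$ the argument $m^n(r)$ lies in a region where $m$ is arbitrarily small, forcing $m^{n+1}(r)\to 0$ and contradicting escape. To produce the ray I would use the Hadamard factorisation $f(z)=z^n e^{Q(z)}\prod_k E(z/a_k,m)$ with $\max(\deg Q,m)\geq 2$ and split into the cases $\deg Q\geq 2$ and $m\geq 2$. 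In the first case $e^{Q}$ decays super-polynomially in open sectors of opening $\pi/\deg Q$, while classical Borel--Lindel\"of estimates for canonical products with real zeros give only polynomial growth off the real axis, so decay wins on some ray. In the second case, with $\deg Q\leq 1$, sharper value-distribution estimates for the canonical product of real-zero exponent $\geq 2$ are needed to extract a decay direction; this is the main technical step.

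For part~(b) the plan (to be carried out in Section~5) is, for each $\rho\in[1/2,2]$, to exhibit explicit real transcendental entire functions of order~$\rho$ with only real zeros, one satisfying~\eqref{minmodprop} and one failing it. Failure examples can be built from prototypes such as $\cos\sqrt{z}$ for $\rho=1/2$ and $(\sin\pi z)/(\pi z)$ for $\rho=1$, for which $|f|$ is bounded by a constant on a real ray so that $m(r)\leq 1$ for all large $r$; the zero density and genus can be adjusted to realise any intermediate order. By part~(a)(ii) any success example must have genus $\leq 1$, and in fact $\deg Q=0$ is essentially forced (otherwise $|e^{Q}|$ decays on one real ray and kills $m$), so the success examples are pure canonical products of genus~$0$ or~$1$ whose real zeros form a sequence with gaps large enough that, on circles $|z|=r$ inside those gaps, $|f|$ is bounded below by a quantity tending to infinity along a suitable sequence of radii, and the orbit of $m$ can be kept inside such gaps.

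The main obstacle is the canonical-product ray-decay result in part~(a)(ii) in the regime $m\geq 2$, where the real-zero hypothesis must combine delicately with the primary-factor exponential corrections $e^{z/a_k+\cdots+z^m/(m a_k^m)}$ to force decay in a single direction. A secondary obstacle in part~(b) is to verify that the proposed success examples really satisfy~\eqref{minmodprop} for every $\rho\in[1/2,2]$, and that the failure examples have $m^n(r)\not\to\infty$ for every starting value $r>0$ rather than merely along sparse sequences of radii.
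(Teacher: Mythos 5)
Part (a)(i) of your proposal is fine and coincides with the paper (it is just the citation of \cite[Theorem~1.1]{ORS17}, resting on the $\cos\pi\rho$ theorem). But your argument for (a)(ii) has a genuine gap. You split into the cases $\deg Q\ge 2$ and $m\ge 2$, and in the first case you claim that canonical products with real zeros grow ``only polynomially off the real axis'', so that the sector decay of $e^{Q}$ wins on some ray. That claim is false: a canonical product of genus $m$ with real zeros can have order up to $m+1$, and $\log|P(z)|$ is typically of size $|z|^{\rho(P)}$ in directions off the real axis (already $\sin\pi z$ grows exponentially on the imaginary axis). The decay $\exp(-cr^{\deg Q})$ of $e^{Q}$ can only be guaranteed to dominate when $\deg Q\ge m+1$, which is exactly the dichotomy used in the paper's Theorem~\ref{main2}: the case $d_Q\ge m+1$ is handled via the estimate $\log|P(z)|=o(|z|^{m+1})$, while the complementary case $m\ge d_Q$ (which your first case does not cover when $m\ge\deg Q\ge 2$) is the heart of the matter. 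There the paper's key input is Lemma~\ref{lemma:E}: one chooses an angle $\theta$ with $\cos m\theta<0$, $\cos(m+1)\theta=0$ (with the analogous conditions for odd $m$), so that every primary factor satisfies $|E(Te^{i\theta},m)|\le 1$ and $\log|E(Te^{i\theta},m)|\le -C|T|^{m}$ (or $-C|T|^{m-1}$), whence $\log|P(re^{i\theta})|\le -Cr^{m}\sum_{|a_k|<\alpha r}|a_k|^{-m}\to-\infty$ because $\sum|a_k|^{-m}$ diverges by minimality of $m$, while $|z^{n}e^{Q(z)}|$ contributes at most $Ar^{m}$ (with the sign of the leading term controlled by replacing $\theta$ by $\theta+\pi$ if necessary). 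You explicitly defer precisely this step as ``the main technical step'', and you give neither it nor the alternative deficiency route via Lemma~\ref{EFH1.2}; so (a)(ii) is not established by your proposal.

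Part (b) is likewise a plan rather than a proof. For the failure examples at orders strictly between $1/2$ and $1$ (and, after the substitution $z\mapsto z^{2}$, between $1$ and $2$), ``adjusting the zero density'' cannot suffice as stated: the theorem itself shows that at a fixed order one distribution of real zeros gives \eqref{minmodprop} while another destroys it, so a quantitative asymptotic is required. The paper (Section~\ref{1/2to2}) invokes Hardy's asymptotics for $\prod(1-z/n^{\sigma})$, where $\cot(\pi\rho)<0$ forces $m(r)\to 0$, and Lindel\"of's asymptotics at order~$1$; you supply no substitute. For the success examples your sketch is essentially the idea behind Theorem~\ref{dan1}, but the substance is the explicit lacunary construction (multiplicities with $n(a_k)=[a_k^{\rho}]$, the recursion $a_{k+1}=(12a_k^{\rho})^{1/(1-\rho)}$, the lower bound $m(3a_k)\ge 2^{[a_k^{\rho}]-1}$) together with the verification, via Lemma~\ref{fullequiv}, that $\tilde m(r)>r$ for all large $r$; ``keeping the orbit of $m$ inside the gaps'' is exactly what has to be proved, not assumed. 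You also do not address orders in $(1,2]$, where a direct genus-$1$ product with its exponential primary factors is awkward and the paper instead passes to $g(z)=f(z^{2})$, noting that \eqref{minmodprop} transfers between $f$ and $g$. As written, both halves of (b) therefore remain open in your proposal.
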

%As noted above, part~(a) was proved in~\cite{ORS17} and part~(c) will be proved in Section~4 of this paper.

In view of the results in Theorem~\ref{3cases}, it is natural to ask whether property~\eqref{minmodprop} holds for some or all entire functions of order~1/2 minimal type. We shall investigate this question in forthcoming work~\cite{NRS18}.

\section{Spiders' webs in $I(f)$}
\label{genus1}
\setcounter{equation}{0}

%Moreover, if there exists $p \in \N$ such that $\tilde{m}^p(s) > M(s)$ for sufficiently large $s$, then $V(f)$ contains a spider's web, where

We begin our proof of Theorem~\ref{SW} by letting $f$ be a real transcendental entire function of genus at most~1 with only real zeros for which there exists $r>0$ such that $m^n(r) \to \infty$ as $n \to \infty$. We show that, for such a function, $I(f)$ is a spider's web. In fact we prove the following stronger result.

\begin{theorem}\label{genus1theorem}
Let~$f$ be a real transcendental entire function of genus at most~1 with only real zeros and suppose that there exists $r>0$ such that
\begin{equation}\label{strongminmodprop}
m^n(r) \text{ is a strictly increasing sequence with } m^n(r) \to \infty \text{ as } n \to \infty,
\end{equation}
and $m(r)^{1/3} \ge |f(0)|$. Then $I(f)$ contains a spider's web in
\[
 \{ z: |f^n(z)| \geq  M^{-1}(m^{n}(r)^{1/3}) \mbox{ for all } n \in \N \}.
\]
\end{theorem}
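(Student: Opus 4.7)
The plan is to follow the template of \cite[Theorem 1.1]{RS13c}---where a spider's web in~$I(f)$ was constructed for real entire functions of order less than~$1/2$ with only negative zeros, driven by the $\cos\pi\rho$ lower bound on~$m(r)$---but with the winding estimates of \cite[Theorem 1.1]{NRS} for real transcendental entire functions of finite order with only real zeros now playing the role of the key estimate. To set the stage, introduce the scales $t_n := m^n(r)$ and $s_n := M^{-1}(t_n^{1/3})$. Hypothesis~\eqref{strongminmodprop} gives $t_n \nearrow \infty$ and hence $s_n \nearrow \infty$. A short computation, using $M\ge m$ and $M(s)\ge s$ for $s$ large, yields the comparisons $s_n<t_n$ and $s_{n+1}<t_{n+1}=m(t_n)$, so that $C_{s_{n+1}}$ sits strictly inside the bounded complementary component of $f(C_{t_n})\subset\{|w|\ge t_{n+1}\}$.

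The main non-trivial input is the winding theorem of \cite{NRS}: for a real entire function of finite order with only real zeros, the image curve $f(C_R)$ has nonzero winding number about the origin whenever $R$ is sufficiently large. Applied at $R=t_n$, this says that every point of $C_{s_{n+1}}$ has the same nonzero winding number with respect to $f(C_{t_n})$; by the argument principle each such point has preimages under~$f$ in~$\overline\D(0,t_n)$, and the preimage set $f^{-1}(C_{s_{n+1}})\cap\overline\D(0,t_n)$ contains a Jordan curve surrounding a zero of~$f$ in~$\D(0,t_n)$. The auxiliary hypothesis $m(r)^{1/3}\ge|f(0)|$ is used at the base index to ensure that $f(0)$ lies strictly inside the relevant preimage disc, so that this Jordan curve may be taken to surround the origin; denote the outermost such curve by~$\Gamma_n$ and its bounded complementary component by~$G_n$. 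By construction $\Gamma_n\subset\overline\D(0,t_n)$, $f(\Gamma_n)=C_{s_{n+1}}$, and $0\in G_n$.

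What remains is to verify that (i) $G_n\subset G_{n+1}$ and $\bigcup_n G_n=\C$, and (ii) for every $z\in\Gamma_n$ and every $k\in\N$ one has $|f^k(z)|\ge s_k$. Nestedness follows from a planar-topology argument: $f(\Gamma_{n+1})=C_{s_{n+2}}$ surrounds the set~$\Gamma_n\subset\overline\D(0,t_n)$ once $s_{n+2}>t_n$, which follows iteratively from the scale relations and the monotonicity of~$(t_n)$, and the outermost choice of~$\Gamma_{n+1}$ then forces $\Gamma_n\subset G_{n+1}$. Exhaustion $\bigcup_n G_n=\C$ follows because the outermost preimage curves must ultimately surround any given compact set, using $t_n\to\infty$. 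For the escape rate in (ii), a bootstrap based on the nested structure shows inductively that $f^k(\Gamma_n)\subset\C\setminus G_{n+k-1}$, so that $|f^k(z)|\ge s_{n+k}\ge s_k$, which places~$\Gamma_n$ in the stated subset of~$I(f)$.

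I expect the principal obstacle to be the careful topological analysis needed to justify that the outermost preimage curve of~$C_{s_{n+1}}$ in~$\overline\D(0,t_n)$ is a single Jordan curve surrounding the origin, and that the resulting family $(\Gamma_n)$ is genuinely nested from the base index onwards---this is where the hypothesis $m(r)^{1/3}\ge|f(0)|$ and the strict monotonicity in~\eqref{strongminmodprop} enter most sharply---together with the bookkeeping among the three related scales $(t_n)$, $(s_n)$ and the radii of the~$\Gamma_n$, so that the iterated escape rate on the constructed spider's web matches the stated bound $M^{-1}(m^n(r)^{1/3})$.
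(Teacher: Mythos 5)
Your construction does not establish the escape of the points it produces, and this is a fatal gap rather than a technicality. On $\Gamma_n$ you know only that $|f(z)|=s_{n+1}$, where $s_{n+1}=M^{-1}(m^{n+1}(r)^{1/3})$. To continue, your bootstrap needs $f(C_{s_{n+1}})$ to avoid $G_{n+1}$, i.e.\ essentially $m(s_{n+1})\ge s_{n+2}$; but the radii $s_n$ do not lie on the orbit of $r$ under $m$, so nothing in the hypothesis \eqref{strongminmodprop} controls $m(s_{n+1})$ at all --- it can be arbitrarily small (indeed $m$ vanishes at the modulus of every zero of $f$), so the points of your curves need not even lie in $I(f)$. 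Even the first step $f(\Gamma_n)=C_{s_{n+1}}\subset\C\setminus G_n$ requires $\Gamma_n\subset \overline{D}(0,s_{n+1})$, whereas your preimage curve is only confined to $\overline{D}(0,t_n)$ with $t_n>s_{n+1}$. The nestedness step has the same defect: you need $s_{n+2}>t_n$, which unravels to $m^2(t_n)>M(t_n)^3$, a comparison between iterates of $m$ and $M$ of exactly the type \eqref{mandM} that this theorem is designed to avoid; for the intended examples (such as the order-zero functions of \cite{RS13b}) such inequalities fail. Finally, the statement you attribute to \cite{NRS} --- that $f(C_R)$ has nonzero winding number about $0$ for large $R$ --- is simply the argument principle (the winding number is the number of zeros in $D(0,R)$) and holds for any entire function with zeros; it is not the content of \cite[Theorem 2.1]{NRS} and it cannot substitute for it, since it gives no information about where orbits go.

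The paper's proof runs quite differently and precisely to get around the fact that $m$ controls only the single orbit $m^n(r)$. One works with the sets $I_N=\{z:|f^n(z)|\ge M^{-1}(m^{n+N}(r)^{1/3})\text{ for all }n\}$ and argues by contradiction: if $I_N$ contains no spider's web, then by Evdoridou's result (Lemma~\ref{Ifan}) its complement has an unbounded component, so one can take a long curve $\gamma_0$ in $I_N^c$ crossing a large annulus. One then shows inductively that the images $f^n(\gamma_0)$ either undergo repeated radial stretching (quantified by the sequences $(r_n)$, $(L_n)$ and Lemma~\ref{convex}), or else satisfy the two-sided bound $1/M(r_n^{1-\eps_n})\le|f|\le M(r_n^{1-\eps_n})$ on a curve crossing the annulus $\overline A(r_n^{1-\eps_n},r_n)$, in which case the genuine winding theorem from \cite{NRS} (Theorem~\ref{main1}, via Corollary~\ref{winding}, and this is where genus at most $1$, i.e.\ the Laguerre--P\'olya property, together with reflection into the upper half-plane, is used) forces the image to wind by $2\pi$ and hence to meet an unbounded component of the fast escaping set $A_{R/2}(f)$ (Lemma~\ref{AR-component}). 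In either case some point of $\gamma_0$ is shown to lie in $I_N$, contradicting $\gamma_0\subset I_N^c$. If you want to salvage a direct construction along your lines, you would need a mechanism that keeps control of \emph{all} subsequent iterates of the constructed curves, and that is exactly what the fast-escaping-set and radial-stretching dichotomy supplies and what preimages of circles of radius $s_n$ do not.
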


This result is sufficient to show that Theorem~\ref{SW} holds for functions of genus at most~1. Indeed, whenever $I(f)$ contains a spider's web, $I(f)$ must be a spider's web, by~\cite[Lemma 4.5]{RS13c}, and whenever \eqref{minmodprop} holds there exists $r>0$ such that \eqref{strongminmodprop} holds, by \cite[Theorem~2.1]{ORS17}, which is stated as Lemma~\ref{fullequiv} in this paper.

The proof of Theorem~\ref{genus1theorem} uses the sequence of closed sets
\[
I_N = \{ z: |f^n(z)| \geq  M^{-1}(m^{n+N}(r)^{1/3}) \mbox{ for all } n \in \N \},\quad N=0,1,2,\ldots,
\]

and we show that $I_N$ contains a spider's web, for all $N=0,1, \ldots,$ using proof by contradiction. The basic idea of the proof is that if $I_N$ does {\it not} contain a spider's web for some~$N$, then we can take a suitably long curve $\gamma_0$ in its complement. We then show that successive images of this curve must either experience repeated radial stretching escaping to infinity, or they must eventually wind round the origin meeting a component of the fast escaping set. In either case we are able to deduce that the curve $\gamma_0$ contains a point in $I_N$ which is a contradiction.

The proof requires several results from earlier papers. Before we state these we introduce some notation.

For $r>0$, we write $C(r) = \{z: |z| = r\}$ and, for $0<r_1<r_2$, we write
\[
A(r_1,r_2)=\{z:r_1<|z|<r_2\}\;\;\text{and}\;\;\overline A(r_1,r_2)=\{z:r_1\le |z|\le r_2\}.
\]

If~$\gamma$ is a plane curve that lies in a simply connected domain containing no zeros of~$f$, then, for any pair of distinct points $z_0,z'_0$ in~$\gamma$, we denote the net change in the argument of $f(z)$ as $z$ traverses~$\gamma$ from $z_0$ to $z'_0$ by $\Delta\!\arg (f(\gamma);z_0,z'_0)$.

The main tool that we use to obtain winding is a corollary of the following theorem, proved in~\cite[Theorem 2.1]{NRS}. Note that~\cite[Theorem 2.1]{NRS} was stated for a continuum but here we only need to apply the result to a curve.

Also, although~\cite[Theorem 2.1]{NRS} was stated for certain entire functions of order less than 2, the proof only required~$f$ to be in the Laguerre--P\'olya class (see \cite[discussion in Section~5]{NRS}). This class is the closure of the set of real polynomials with real zeros and, by the Laguerre--P\'olya theorem (\cite{eL} and \cite{gP}), it consists of functions of the form
\[
f(z) =  \pm z^n e^{b_2z^2+b_1z+b_0} \prod_{k=1}^\infty E(z/a_k, m),
\]
where the $a_k$ are real, $m=0$ or $1$, $b_0, b_1\in\R$ and $b_2\le 0$. In particular, all real entire functions with real zeros and genus at most~1 belong to this class.

\begin{theorem}\label{main1}
Let $f$ be a real transcendental entire function of genus at most~1 with only real zeros. There exists $R_0=R_0(f) > 0$ such that, if~$s$ and~$a$ are positive real numbers with
\begin{equation}\label{s>}
s\ge R_0 \quad\text{and}\quad \log s \ge \frac{64}{a^2} + \frac{80\pi}{a}\
\end{equation}
and $\gamma$ is a curve in $\{z:\Imag z \ge 0\}$ that meets both $C(s)$ and $C(s^{1+a})$ with
\begin{equation}
1/M(s) \le |f(z)| \le M(s), \quad\mbox{for } z\in\gamma, \label{|f|<M}
\end{equation}
then there exist a curve $\Gamma\subset\gamma\cap \overline{A}(s,s^{1+a})$ and $z_0,z'_0\in\Gamma$ such that
\[ \Delta\!\arg(f(\Gamma);z_0,z'_0) \ge \frac{1}{10\pi}\log M(s) \log s^a. \]
\end{theorem}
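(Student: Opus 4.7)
The approach is to exploit the Laguerre--P\'olya structure of $f$. Since $f$ has only real zeros and genus at most~$1$,
\[
f(z) = \pm z^k e^{b_1 z + b_0} \prod_j E(z/a_j, m), \qquad a_j \in \R,\ m \in \{0,1\},
\]
from which, for $z = x + iy$ with $y > 0$,
\[
|f(z)|^2 = e^{2(b_1 x + b_0)}(x^2 + y^2)^k \prod_j \frac{(x - a_j)^2 + y^2}{a_j^2}\, e^{2mx/a_j}.
\]
This is strictly increasing in $y$ for each fixed $x$, and the partial fraction expansion
\[
\frac{f'(z)}{f(z)} = \frac{k}{z} + b_1 + \sum_j \left(\frac{1}{z-a_j} + \frac{m}{a_j}\right)
\]
shows that $\Imag(f'/f) < 0$ throughout the open upper half plane. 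These two structural facts drive the argument.

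The first step is to confine $\gamma\cap \overline A(s, s^{1+a})$ to a thin horizontal strip about~$\R$. Writing
\[
\log|f(x+iy)| - \log|f(x)| = \tfrac12 \sum_j \log\!\Bigl(1 + \tfrac{y^2}{(x-a_j)^2}\Bigr)
\]
and bounding $\log|f(x)|$ from below by a Cartan-type estimate (valid outside a small exceptional set of $x$), the hypothesis $|f|\le M(s)$ on $\gamma$ forces $|\Imag z|\le\delta$, with $\delta$ much smaller than~$s$, along this subcurve. Since $|z|\ge s$ in the annulus, $\Real z$ cannot change sign; after possibly replacing $f(z)$ by $f(-z)$, I may assume the relevant subcurve lies in the first quadrant and that $\Real z$ traverses an interval of length comparable to $s^{1+a}$.

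The second step is to estimate the change in $\arg f$ along this nearly horizontal subcurve. Parametrising locally as $z(t) = x(t) + iy(t)$, one has
\[
\Delta\arg f = \int \Imag(f'/f)(z(t))\, x'(t)\, dt + \int \Real(f'/f)(z(t))\, y'(t)\, dt,
\]
and the first integral, through the sign control $\Imag(f'/f)<0$ and the arctangent integration of each term $1/(z-a_j)$, has magnitude comparable to $\pi$ times the number of real zeros $a_j$ whose value is traversed by $x(t)$. Jensen's formula gives $\int_0^R n(t)/t\, dt$ comparable to $\log M(R)$, which combined with the regularity of $\log M(r)$ for finite-order entire functions yields a lower bound on this zero count. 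Together with careful control of the arctangent tails (the contributions from zeros outside the annulus), this delivers total variation at least $\tfrac{1}{10\pi}\log M(s)\log s^a$ once the multiplicative losses are absorbed using the hypothesis $\log s \ge 64/a^2 + 80\pi/a$. The second integral is negligible by the thin-strip confinement. Taking $z_0,z_0'\in\gamma\cap\overline A(s,s^{1+a})$ at the extrema of $\arg f \circ \gamma$ and letting $\Gamma$ be the subarc of $\gamma$ between them yields the claim.

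The main obstacle will be Step~1: the strip confinement is delicate precisely when $\gamma$ passes close to a real zero of $f$, since there $\log|f(x)|$ is large and negative and the Cartan-type lower bound fails. The resolution is to dichotomise $\gamma$ into pieces on which either $\log|f(x)|$ is well-controlled (so the strip bound and winding estimate above apply) or $\gamma$ is trapped in a small disc about a zero of $f$, in which case the argument principle supplies a direct winding contribution from that zero. Balancing these error terms against the specific numerical hypothesis $\log s \ge 64/a^2 + 80\pi/a$ is the most intricate part of the proof, and is where the cumulative contribution from zeros outside $\overline A(s, s^{1+a})$ is shown to be controlled.
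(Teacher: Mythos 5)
You should first be aware that this paper does not prove Theorem~\ref{main1} at all: it is imported verbatim from \cite[Theorem~2.1]{NRS}, with only the remark that the proof there uses nothing beyond membership of the Laguerre--P\'olya class. So there is no in-paper proof to compare against, and your proposal has to be judged on its own terms. On those terms it has a genuine gap at its core. Your entire mechanism for producing winding is the first integral $\int \Imag(f'/f)\,x'(t)\,dt$, whose size you measure by the number of real zeros $a_j$ traversed, and you bound that count from below by asserting that Jensen's formula makes $\int_0^R n(t)/t\,dt$ ``comparable to'' $\log M(R)$. Jensen gives only the upper bound $N(r)\le \log M(r)+O(1)$; there is no lower bound of this type for the class of functions in the theorem. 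The function $f(z)=e^z$ is a real transcendental entire function of genus~$1$ with (vacuously) only real zeros, so it is covered by the statement, and it has no zeros whatsoever; more generally $f(z)=e^z\prod_k(1+z/k^2)$ has $n(r)\sim\sqrt r$ while $\log M(r)\sim r$. For such $f$ all of the required winding must come from the term $\int\Real(f'/f)\,y'(t)\,dt$ --- exactly the term you declare ``negligible by the thin-strip confinement.'' And the confinement itself fails for these examples: for $e^z$ the constraint $|f|\le M(s)$ pins down $\Real z$ (to $|\Real z|\le s$), not $\Imag z$, and any admissible curve reaching $C(s^{1+a})$ is forced to run essentially vertically, so your reduction to a nearly horizontal arc in the first quadrant with $\Real z$ traversing an interval of length $\asymp s^{1+a}$ is false.

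Two further symptoms of the same problem. First, the target bound has the product form $\frac{1}{10\pi}\log M(s)\log s^a$, and nothing in your outline manufactures the second factor $\log s^a=a\log s$: a count of zeros in the annulus would scale like $n(s^{1+a})-n(s)$, not like $\log M(s)$ times the logarithmic width of the annulus. A bound of that product shape points to integrating a winding density of order $\log M(s)$ per unit of $\log|z|$ across the annulus, which is a different mechanism from counting crossed zeros. Second, the two-sided hypothesis $1/M(s)\le|f|\le M(s)$ interacts with the geometry of $\gamma$ in an essential way (it is what rules out long admissible arcs in regions where the zeros are sparse), and your sketch defers precisely this interaction --- the dichotomy near zeros, the tail estimates, the balancing against $\log s\ge 64/a^2+80\pi/a$ --- to ``the most intricate part of the proof'' without supplying it. The structural facts you do establish (the Laguerre--P\'olya factorisation, the monotonicity of $|f(x+iy)|$ in $|y|$, and $\Imag(f'/f)\le 0$ in the upper half-plane) are correct and certainly relevant, but as it stands the argument does not prove the theorem, and for genus-$1$ functions dominated by the exponential factor it cannot be repaired without abandoning the zero-counting step.
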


The corollary of Theorem~\ref{main1} which we use is given below. This result essentially appears in the middle of the proof of~\cite[Theorem 2.4]{NRS} but we include the proof here for simplicity.

\begin{corollary}\label{winding}
Let $f$ be a real transcendental entire function of genus at most~1 with only real zeros. There exists $R_1=R_1(f)\ge R_0(f)$ such that, if~$t$ and~$\eps$ are positive real numbers with
\begin{equation}\label{teps}
t^{1-\eps} \ge R_1 \quad\text{and}\quad \eps \in [10 / \sqrt{\log t}, 1),
\end{equation}
and $\gamma$ is a curve in $\{z:\Imag z \ge 0\}$ that meets both $C(t^{1-\eps})$ and $C(t)$ with
\begin{equation}\label{Meps}
1/M(t^{1-\eps}) \le |f(z)| \le M(t^{1-\eps}), \quad\mbox{for } z\in\gamma,
\end{equation}
then there exist a curve
$\Gamma\subset\gamma\cap \overline{A}(t^{1-\eps},t)
$
and $z_0,z'_0\in\Gamma$ such that
\[ \Delta\!\arg(f(\Gamma);z_0,z'_0) \ge 2\pi. \]
\end{corollary}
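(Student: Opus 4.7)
The plan is to deduce Corollary \ref{winding} directly from Theorem \ref{main1} by making the substitution $s = t^{1-\eps}$ and $a = \eps/(1-\eps)$, which forces $s^{1+a} = t$. Under this choice the annulus $\overline{A}(s,s^{1+a})$ coincides with $\overline{A}(t^{1-\eps},t)$, and the condition $1/M(s) \le |f(z)| \le M(s)$ on~$\gamma$ agrees with \eqref{Meps} exactly. It therefore suffices to (i) verify the technical condition $\log s \ge 64/a^2 + 80\pi/a$ in \eqref{s>}, and (ii) show that the winding estimate furnished by Theorem \ref{main1} exceeds $2\pi$; both will follow provided $R_1 \ge R_0(f)$ is taken sufficiently large.

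For (i), substituting $\log s = (1-\eps)\log t$ and $1/a = (1-\eps)/\eps$, then dividing by $1-\eps$, reduces the inequality to $\log t \ge 64(1-\eps)/\eps^2 + 80\pi/\eps$. The hypothesis $\eps \ge 10/\sqrt{\log t}$ gives $\eps^2\log t \ge 100$ and $\eps\sqrt{\log t} \ge 10$, so the right-hand side is bounded by $0.64\log t + 8\pi\sqrt{\log t}$. This is dominated by $\log t$ once $\log t$ exceeds a suitable absolute constant, which is ensured by enlarging $R_1$, since $t \ge t^{1-\eps} \ge R_1$.

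For (ii), the decisive identity is $a\log s = \log t - \log s = \eps\log t$. Applying Theorem \ref{main1} and invoking $\eps\log t \ge 10\sqrt{\log t}$ then produces a subcurve $\Gamma \subset \gamma \cap \overline{A}(t^{1-\eps},t)$ and points $z_0, z'_0 \in \Gamma$ with
\[
\Delta\!\arg(f(\Gamma);z_0,z'_0) \;\ge\; \frac{\eps\log t}{10\pi}\log M(s) \;\ge\; \frac{\sqrt{\log t}}{\pi}\log M(s).
\]
Since $s \ge R_1$ and $f$ is transcendental entire, $\log M(s) \ge \log M(R_1)$ can be pushed above any prescribed constant by enlarging $R_1$; for example, arranging $\log M(R_1) \ge 1$ together with $\sqrt{\log R_1} \ge 2\pi^2$ already forces the right-hand side to exceed $2\pi$. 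The main obstacle is purely bookkeeping---picking $R_1$ large enough that all these inequalities hold simultaneously---since no mathematical ingredient beyond Theorem \ref{main1} is required.
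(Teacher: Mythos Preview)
Your proposal is correct and follows essentially the same approach as the paper: the same substitution $s=t^{1-\eps}$, $a=\eps/(1-\eps)$, the same verification of \eqref{s>} (yielding the identical threshold $\sqrt{\log t}>200\pi/9$), and the same appeal to Theorem~\ref{main1}. The only cosmetic difference is in the final estimate: the paper bounds $\log t^\eps$ below by $\log(10\sqrt{\log t})$ and uses $\log M(t^{1-\eps})\ge 1$, whereas you use the sharper bound $\eps\log t\ge 10\sqrt{\log t}$ directly; both routes reach $2\pi$ once $R_1$ is sufficiently large.
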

\begin{proof}
We claim that we can apply Theorem~\ref{main1} to $\gamma$ with $s=t^{1-\eps}$ and $s^{1+a} = t$. It follows from~\eqref{Meps} that~\eqref{|f|<M} holds. To show that~\eqref{s>} holds, we note that $a=\eps/(1-\eps)$ and so, by \eqref{teps},
\begin{eqnarray*}
\frac{64}{a^2} + \frac{80 \pi}{a}  & = & \frac{64 (1-\eps)^2}{\eps^2} + \frac{80 \pi (1-\eps)}{\eps}\\
& < & \left( \frac{64}{100}\log t + \frac{80 \pi}{10} \sqrt{\log t} \right) (1-\eps)\\
& = & \left( \frac{64}{100} + \frac{8 \pi}{\sqrt{\log t}} \right) (1-\eps) \log t\\
& < & (1-\eps)\log t = \log s,
\end{eqnarray*}
provided $\sqrt{\log t} > 200 \pi / 9$. Thus ~\eqref{s>} holds, provided $R_1$ is sufficiently large. So it follows from Theorem~\ref{main1} that
there exist a curve
\[
\Gamma\subset\gamma\cap \overline{A}(s,s^{1+a}) = \gamma\cap \overline{A}(t^{1-\eps},t)
\]
 and $z_0,z'_0\in\Gamma$ such that
\begin{eqnarray*}
\Delta\!\arg(f(\Gamma);z_0,z'_0) & \ge & \frac{1}{10\pi}\log M(t^{1-\eps}) \log t^{\eps} \\
 & \ge & \frac{1}{10\pi}\log M(t^{1-\eps}) \log (10 \sqrt{\log t})\\
 & \ge & \frac{1}{10\pi}  \log (10 \sqrt{\log t}) \geq 2\pi,
\end{eqnarray*}
by \eqref{teps}, provided $R_1$ is sufficiently large.
\end{proof}

We also use the following result about the fast escaping set, proved in \cite[Lemma~4.4]{RS13c}.

\begin{lemma}\label{AR-component}
Let $f$ be a {\tef}. There exists $R_2=R_2(f)>0$ such that if $R\ge R_2$, then there is a component of $A_{R/2}(f)$ that meets $\{z:|z|<R\}$ and is unbounded.
\end{lemma}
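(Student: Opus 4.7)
The plan is to build an unbounded continuum inside $A_{R/2}(f)$ that meets the open disk $\{|z|<R\}$ via a nested-continua argument driven by the maximum modulus principle, and then take the ambient component of $A_{R/2}(f)$ containing this continuum.

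First I would fix $R_2$ large enough that $M$ is strictly increasing with $M(r)>r$ for $r\ge R_2/2$, and for $R\ge R_2$ and $n\in\N$ set
\[
F_n:=\{z:|f^n(z)|\ge M^n(R/2)\}, \qquad U_n:=\{z:|f^n(z)|>M^n(R/2)\}.
\]
Iterating $|f(w)|\le M(|w|)$ gives $F_{n+1}\subset F_n\subset\{|z|\ge R/2\}$, and so $A_{R/2}(f)=\bigcap_n F_n$. The crucial structural input is that every component of the open set $U_n$ is unbounded: a bounded component $V$ would satisfy $|f^n|=M^n(R/2)$ on $\partial V$, hence $|f^n|\le M^n(R/2)$ on $V$ by the maximum modulus principle, contradicting $V\subset U_n$.

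Next I would inductively construct a nested sequence of unbounded continua $\Gamma_1\supset\Gamma_2\supset\cdots$ with $\Gamma_n\subset F_n$, all threaded through a fixed base point $z_0$ in the open disk $\{|z|<R\}$. For the base case, since every circle $C(r)$ with $R/2<r\le R$ carries a point where $|f|=M(r)>M(R/2)$, the set $U_1$ meets the open annulus $A(R/2,R)$; pick $z_0\in U_1\cap A(R/2,R)$ and let $\Gamma_1$ be the closure of the component of $U_1$ containing $z_0$, which is unbounded by the key fact above. For the inductive step, which is the main technical obstacle, one refines $\Gamma_n$ to an unbounded sub-continuum $\Gamma_{n+1}\subset\Gamma_n\cap F_{n+1}$ still meeting a fixed compact subset of $\{|z|<R\}$ (after possibly replacing $z_0$ by a nearby point in $\Gamma_n\cap F_{n+1}\cap\{|z|<R\}$). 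This refinement uses the Boundary Bumping Theorem applied within the continuum $\Gamma_n$, together with a relative version of the unboundedness-of-components fact applied inside $\Gamma_n$: no component of $U_{n+1}\cap\Gamma_n$ can be separated in $\Gamma_n$ from the "end at infinity" of $\Gamma_n$, since a contradiction with the maximum principle for $f^{n+1}$ would again arise.

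Finally I would pass to the Hausdorff limit. Viewing each $\Gamma_n\cup\{\infty\}$ as a continuum in the Riemann sphere $\CC$, a convergent subsequence produces a limit continuum $\Gamma\cup\{\infty\}$ containing both $\infty$ and the base point $z_0$. For any $z\in\Gamma$ approximated by $z_{n_k}\in\Gamma_{n_k}\subset F_{n_k}$, the nesting $F_{n_k}\subset F_m$ for $n_k\ge m$ together with closedness of $F_m$ forces $z\in F_m$, whence $\Gamma\subset\bigcap_m F_m=A_{R/2}(f)$. The component of $A_{R/2}(f)$ containing $z_0$ is then unbounded (it contains the unbounded continuum $\Gamma$) and meets $\{|z|<R\}$ (it contains $z_0$), which is precisely the conclusion of the lemma.
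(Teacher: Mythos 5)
Your set-up (the nested closed sets $F_n$ with $A_{R/2}(f)=\bigcap_n F_n$, the fact that every component of the open set $U_n$ is unbounded, and the idea of passing to a limit of nested unbounded continua) is sound, but the inductive step conceals the entire content of the lemma, and the mechanism you propose for it does not work. First, you need $\Gamma_n\cap F_{n+1}$ to contain a point lying in a fixed compact subset of $\{z:|z|<R\}$, and you simply assert this (``after possibly replacing $z_0$ by a nearby point''). Nothing in your construction supplies such a point: from $|f(z_0)|=M(r)>M(R/2)$ you cannot conclude anything about $|f^2(z_0)|$, because the maximum of $|f|$ over the continuum $f\bigl(\Gamma_n\cap\overline{D}(0,R)\bigr)$ need not be anywhere near $M$ of its outer radius. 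Producing even one point $z$ with $|z|<R$ and $|f^n(z)|\ge M^n(R/2)$ for all $n\in\N$ is precisely the hard part of the lemma; it is the localized existence theorem for fast escaping points (Bergweiler--Hinkkanen \cite{BH99}, refined in \cite{RS10a}), proved there by a quite different device (a Bloch-type covering argument giving nested discs whose images cover one another), and the loss from $R$ to $R/2$ in the statement is there exactly to absorb the losses in that argument. Second, your ``relative version of the unboundedness-of-components fact applied inside $\Gamma_n$'' is not valid: the maximum modulus principle applies to bounded open subsets of the plane, not to relatively open subsets of a continuum. The restriction of $|f^{n+1}|$ to a curve can have strict local maxima, so a component of $U_{n+1}\cap\Gamma_n$ can perfectly well be bounded and separated in $\Gamma_n$ from the end at infinity, with $|f^{n+1}|=M^{n+1}(R/2)$ on its relative boundary and no contradiction arising. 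So the refinement step fails both at ``meets the disc'' and at ``is unbounded''.

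For comparison: the paper itself does not prove this statement but quotes it from \cite[Lemma~4.4]{RS13c}, and the argument there runs in the opposite order to yours. One first obtains a point $z_1\in A_{R/2}(f)$ with $|z_1|<R$ (the existence result just mentioned), and only then builds the continua, taking for the $n$th set the component containing $z_1$ of the preimage $f^{-n}(\{z:|z|\ge M^n(R/2)\})$; these components are closed, automatically nested, all contain the \emph{same} point $z_1$, and each is unbounded by the plane maximum-principle argument applied to preimages of unbounded closed connected sets (this is how \cite{RS10a} shows every component of $A_R(f)$ is unbounded). The nested intersection of the corresponding spherical continua through $z_1$ and $\infty$ then lies in $A_{R/2}(f)$ and is unbounded, giving the lemma with no Hausdorff-limit subsequence and no drifting base points (note that a base point only controlled in $\overline{D}(0,R)$ in the limit would not even give ``meets $\{z:|z|<R\}$''). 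To repair your write-up you would have to import or reprove that localized existence result; the continuum bookkeeping alone cannot produce it.
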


The next result we need concerns uniform rates of escape of quite a general form. This result was proved in~\cite[Theorem~1.4]{V}. Note that, although the statement of~\cite[Theorem~1.4]{V} assumes that the sequence $(a_n)$ satisfies $a_{n+1} \leq M(a_n)$, for $n \in \N$, the proof there only uses the consequence of this assumption that $a_{n} \leq M^n(R)$ for $n \in \N$ and some $R>0$, and we now state the result in that form.

\begin{lemma}\label{Ifan}
Let $f$ be a {\tef} and let $(a_n)$ be a positive increasing sequence with $a_n \to \infty$ as $n \to \infty$, $a_{n} \leq M^n(R)$ for $n \in \N$ and some $R>0$, and $a_1$ sufficiently large that the disc $D(0,a_1)$ contains a periodic cycle of $f$. Let
\[
I(f,(a_n)) = \{z: |f^n(z)| \geq a_n \mbox{ for all } n \in \N\}.
\]
If $I(f,(a_n))^c$ has a bounded component, then $I(f,(a_n))$ contains a spider's web.
\end{lemma}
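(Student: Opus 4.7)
The plan is to build the spider's web by passing to topological hulls along forward orbits of $f$. For a bounded set $S \subset \C$, write $\wh{S}$ for its topological hull, i.e.\ the complement in $\C$ of the unbounded component of $\C \setminus S$; if $S$ is open and bounded then $\wh S$ is an open, bounded, simply connected set with $\wh S \supset S$ and $\partial \wh S \subset \partial S$. I begin with $V := \wh U$, where $U$ is the given bounded component of $I(f,(a_n))^c$. Since $U$ is a component of the open set $I(f,(a_n))^c$, its boundary satisfies $\partial U \subset I(f,(a_n))$, and hence $V$ is a bounded, simply connected, open set with $V \supset U$ and $\partial V \subset I(f,(a_n))$.

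Next, for $n \ge 1$, set $V_n := \wh{f^n(V)}$. Since $f^n$ is a nonconstant entire function, $f^n(V)$ is open and bounded, so $V_n$ is again bounded, simply connected, and open. The key estimate is $\partial V_n \subset I(f,(a_n))$: from $\partial V_n \subset \partial f^n(V) \subset f^n(\partial V)$ (using $f^n(\overline V) = f^n(V) \cup f^n(\partial V)$), any $w = f^n(z)$ with $z \in \partial V$ satisfies
\[
|f^k(w)| = |f^{k+n}(z)| \ge a_{k+n} \ge a_k \qfora k \ge 1,
\]
by the monotonicity of $(a_n)$; in particular $|w| = |f^n(z)| \ge a_n$, so $\partial V_n \subset \{|z| \ge a_n\}$. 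Because $V_n$ is simply connected with $\partial V_n \cap D(0,a_n) = \emptyset$, the connected disc $D(0,a_n)$ splits into the two disjoint open pieces $V_n \cap D(0,a_n)$ and $D(0,a_n) \setminus \overline{V_n}$, yielding the dichotomy: either $D(0,a_n) \subset V_n$, or $V_n \cap D(0,a_n) = \emptyset$.

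Were the second alternative of this dichotomy to hold for every $n$, then $f^n(V) \subset V_n \subset \{|z| \ge a_n\}$ for all $n$, giving $V \subset I(f,(a_n))$ and contradicting $\emptyset \ne U \subset V$. Hence some $n_1$ satisfies $V_{n_1} \supset D(0,a_{n_1}) \supset D(0,a_1)$; in particular $V_{n_1}$ swallows the periodic cycle guaranteed by the hypothesis. This cycle then acts as a lever: applying the construction once more, with $V_{n_1}$ playing the role of $V$, I define $W_m := \wh{f^m(V_{n_1})}$, and the same reasoning gives that $W_m$ is bounded, simply connected, open, with $\partial W_m \subset I(f,(a_n))$ and $\partial W_m \subset \{|z| \ge a_m\}$. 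Since the cycle is invariant under $f$,
\[
W_m \supset f^m(V_{n_1}) \supset f^m(\text{cycle}) = \text{cycle} \subset D(0,a_1) \subset D(0,a_m),
\]
so $W_m \cap D(0,a_m) \ne \emptyset$ and the dichotomy now forces the favourable alternative $W_m \supset D(0,a_m)$ for every $m \ge 1$.

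Finally, the spider's web is extracted by selecting indices $m_1 < m_2 < \cdots$ with $a_{m_{k+1}} > \max_{z \in \overline{W_{m_k}}}|z|$ (possible because $a_m \to \infty$ and $\overline{W_{m_k}}$ is compact), so that $\overline{W_{m_k}} \subset D(0,a_{m_{k+1}}) \subset W_{m_{k+1}}$. Then $G_k := W_{m_k}$ is a nested family of bounded simply connected domains with $\partial G_k \subset I(f,(a_n))$ and $\bigcup_k G_k = \C$, i.e.\ the required spider's web in $I(f,(a_n))$. The main point I expect to need the most care is the planar-topology bookkeeping: both the inclusion $\partial \wh U \subset \partial U$ (and likewise for $\partial V_n$) and the identity $\partial f^n(V) \subset f^n(\partial V)$ underwrite the passage from hypothesis to dichotomy, and while each is a standard consequence of component and continuity considerations, they must be in place before the rest of the argument --- the one-step dichotomy followed by cycle-powered iteration --- can proceed.
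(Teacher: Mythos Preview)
The paper does not itself prove this lemma; it is quoted from \cite[Theorem~1.4]{V} (Evdoridou). Your argument --- passing to topological hulls of forward images, using the dichotomy $D(0,a_n)\subset V_n$ versus $V_n\cap D(0,a_n)=\emptyset$, and then using the periodic cycle to force the favourable alternative from some stage onwards --- is exactly the standard approach and correctly produces a nested family of bounded simply connected domains $G_k$ with $\partial G_k\subset I(f,(a_n))$ and $\bigcup_k G_k=\C$.

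There is, however, one genuine gap at the last step. A spider's web is by definition (see the Introduction) a \emph{connected} set together with such a family of domains; you have exhibited the ``loops'' $\partial G_k$ but not a connected subset of $I(f,(a_n))$ containing them, and you conflate the two when you write ``i.e.\ the required spider's web''. This is precisely where the unused hypothesis $a_n\le M^n(R)$ enters. It gives $A_R(f)\subset I(f,(a_n))$, and since every component of $A_R(f)$ is closed and unbounded (see~\cite{RS10a}), any fixed component $K$ of $A_R(f)$ must meet $\partial G_k$ for all sufficiently large~$k$ (as $K\cap G_k\ne\emptyset$ eventually, while $K\not\subset G_k$ by unboundedness). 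The connected set $K\cup\bigcup_{k\ge k_0}\partial G_k\subset I(f,(a_n))$ is then the required spider's web. Without this or an equivalent step, $\bigcup_k\partial G_k$ could in principle be a disjoint union of loops, which is not a spider's web.
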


We also use the following result on the convexity of the maximum modulus function; see \cite[Lemma~2.2]{RS09}.

\begin{lemma}\label{convex}
Let $f$ be a {\tef}. There exists $R_3=R_3(f)>0$ such that if $r>R_3$ and $c>1$ then
\[
M(r^c) \geq M(r)^c.
\]
\end{lemma}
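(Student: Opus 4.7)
The plan is to reduce the lemma to Hadamard's three circles theorem, which states that $\log M(r,f)$ is a convex function of $\log r$ on $(0,\infty)$. Setting $u=\log r$ and $\Phi(u)=\log M(e^u)$, the desired conclusion $M(r^c)\ge M(r)^c$ becomes the statement $\Phi(cu)\ge c\,\Phi(u)$ for $u$ sufficiently large and every $c>1$, where $\Phi\colon\R\to\R$ is convex. The second ingredient is that, since $f$ is transcendental, $M(r)/r^k\to\infty$ for every $k\in\N$, equivalently $\Phi(u)/u\to\infty$ as $u\to\infty$.

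I would fix any reference level $u^*>0$, for concreteness $u^*=1$. By convexity of $\Phi$, the chord-slope $v\mapsto(\Phi(v)-\Phi(u^*))/(v-u^*)$ is non-decreasing for $v>u^*$, so for $u>u^*$ and $c>1$ one has $(\Phi(u)-\Phi(u^*))/(u-u^*)\le(\Phi(cu)-\Phi(u^*))/(cu-u^*)$. Clearing denominators and regrouping should yield, after a short routine calculation, the inequality
\[
 \Phi(cu) - c\,\Phi(u) \;\ge\; \frac{(c-1)\,u\,u^*}{u - u^*}\left(\frac{\Phi(u)}{u} - \frac{\Phi(u^*)}{u^*}\right).
\]
The prefactor on the right is strictly positive, so the right-hand side is non-negative precisely when $\Phi(u)/u\ge \Phi(u^*)/u^*$. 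Since $\Phi(u)/u\to\infty$, this lower bound holds for all $u$ beyond some threshold $U=U(f)$ depending only on $f$ (and the fixed choice of $u^*$). Taking $R_3=e^U$ and exponentiating then delivers the lemma.

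I expect no serious obstacle: the argument is a direct application of Hadamard's three circles theorem together with the standard growth bound for transcendental entire functions. The only point worth flagging is that the threshold $R_3$ has to be chosen independently of $c$, which is automatic here because the sufficient condition $\Phi(u)/u\ge\Phi(u^*)/u^*$ makes no reference to $c$; this matters since the lemma must hold for all $c>1$ simultaneously.
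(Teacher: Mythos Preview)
Your argument is correct. The paper does not prove this lemma; it simply cites \cite[Lemma~2.2]{RS09}. Your approach---Hadamard's three circles theorem giving convexity of $\Phi(u)=\log M(e^u)$, combined with $\Phi(u)/u\to\infty$ for transcendental~$f$---is exactly the standard route, and your displayed inequality follows from the chord-slope inequality by the algebra you indicate. The key observation that the threshold is independent of~$c$ is handled correctly.
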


Finally, we use the following technical lemma.
\begin{lemma}\label{prodL}
Let $r_n$ be a sequence satisfying
\begin{equation}\label{r}
r_0 \geq \exp(1600) \quad\mbox{and}\quad r_{n+1} \geq r_n,\quad \mbox{for } n \geq 0.
\end{equation}

Suppose further that there exists a subsequence $r_{n_k}$ such that
\begin{equation}\label{rnk}
r_{n_{k+1}} \geq r_{n_k}^{16},\quad \mbox{for } k \in \N.
\end{equation}

Now let $(L_n)$ be a sequence such that
\begin{equation}\label{L}
L_0 = 3, \;\;  L_{n_k+1} = L_{n_k}(1-\delta_{n_k}), \mbox{ for } k \in \N, \;\;\mbox{and}\;\; L_n = 3 \mbox{ otherwise},
\end{equation}
where
\begin{equation}\label{delta}
\delta_{n_k} = 10/\sqrt{\log r_{n_k}},\quad \mbox{for } k \in \N.
\end{equation}
Then $L_n \geq 2$ for all $n \in \N$.
\end{lemma}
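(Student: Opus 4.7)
The plan is to unroll the defining recursion of $L_n$ into a product over at most a single maximal run of subsequence indices, and then bound this product below using the super-exponential growth encoded in $r_{n_{k+1}} \geq r_{n_k}^{16}$.

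First, I would show by a short induction on $j$ that if $n = n_j + 1$ for some $j$, then
\[
L_n = 3 \prod_{k=k_0}^{j}(1 - \delta_{n_k}),
\]
where $k_0 \leq j$ is chosen minimally so that $n_{k_0}, n_{k_0+1}, \ldots, n_j$ are consecutive integers (with $n_{k_0} - 1 \notin \{n_k\}$, or $k_0 = 1$). The point is that the ``otherwise'' clause forces $L_m = 3$ at every index $m$ which is not of the form $n_k + 1$, so any potential compounding of dip factors is broken at such $m$; this is exactly what makes $k_0$ well-defined. For $n$ not of the form $n_k + 1$ we have $L_n = 3$ directly. Hence, to prove $L_n \geq 2$ for all $n$, it suffices to show that $\prod_{k=1}^{\infty}(1 - \delta_{n_k}) \geq 2/3$, since any partial product is at least the full one (each factor lies in $[0,1]$).

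To bound the infinite product, I would iterate the hypothesis $r_{n_{k+1}} \geq r_{n_k}^{16}$, using $\log r_{n_1} \geq \log r_0 \geq 1600$, to obtain $\log r_{n_k} \geq 16^{k-1} \cdot 1600$. Taking square roots gives $\sqrt{\log r_{n_k}} \geq 40 \cdot 4^{k-1}$, so $\delta_{n_k} = 10 / \sqrt{\log r_{n_k}} \leq 4^{-k}$. Summing the geometric series yields $\sum_{k=1}^{\infty} \delta_{n_k} \leq 1/3$. Since each $\delta_{n_k} \in [0,1)$, the standard inequality $\prod_k(1 - x_k) \geq 1 - \sum_k x_k$ applies and gives $\prod_{k=1}^{\infty}(1 - \delta_{n_k}) \geq 2/3$, whence $L_n \geq 3 \cdot 2/3 = 2$.

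The only step requiring any care is the inductive unrolling: one must be explicit that the ``otherwise'' rule resets $L$ to $3$ at every index not of the form $n_k + 1$, so that products of dip factors can arise only within a single run of consecutive integers in the subsequence $\{n_k\}$. Everything else is a routine geometric series estimate, and the constants $1600$ and $16$ in the hypotheses are calibrated precisely so that the sum of the $\delta_{n_k}$ cannot exceed $1/3$.
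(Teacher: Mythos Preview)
Your proof is correct and follows essentially the same route as the paper: bound $L_n$ below by $3\prod_{k\in\N}(1-\delta_{n_k})$, use $r_{n_{k+1}}\ge r_{n_k}^{16}$ and $\log r_0\ge 1600$ to get $\delta_{n_k}\le 4^{-k}$, and then apply $\prod_k(1-x_k)\ge 1-\sum_k x_k$ with $\sum_k 4^{-k}=1/3$. Your additional discussion of the ``run'' structure is more explicit than the paper's one-line justification of the product bound, but the substance is identical.
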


\begin{proof}
It follows from~\eqref{L} that
\begin{equation}\label{prod}
L_n \geq 3 \prod_{k \in \N} (1 - \delta_{n_k}),\quad \mbox{for } n \in \N.
\end{equation}
Also, it follows from~\eqref{r} and~\eqref{rnk} that
\[
\log r_{n_k} \geq 16^{k-1} \log r_{n_1} \geq 16^{k-1} \log r_0 \geq 100 \times 16^k,\quad \mbox{for } k \in \N.
\]
Together with~\eqref{delta}, this implies that
\begin{equation}\label{deltank}
\delta_{n_k} \leq \frac{10}{\sqrt{100 \times 16^k}} = \frac{1}{4^k},\quad \mbox{for } k \in \N.
\end{equation}
It follows from~\eqref{prod} and~\eqref{deltank} that
\[
L_n \geq 3 \prod_{k \in \N}\left(1 - \frac{1}{4^k}\right)\geq 3 \left(1-\sum_{k \in \N} \frac{1}{4^k}\right) = 2,\quad \mbox{for } n \in \N,
\]
as claimed.
\end{proof}

We now prove the main result of this section.

\begin{proof}[Proof of Theorem~\ref{genus1theorem}]
First recall that $r>0$ satisfies~\eqref{strongminmodprop} and $m(r)^{1/3}\ge |f(0)|$.

As stated earlier, we shall show that, under the given hypotheses, all the sets
\begin{equation}\label{INdef}
I_N = \{ z: |f^n(z)| \geq  M^{-1}(m^{n+N}(r)^{1/3}) \mbox{ for all } n \in \N \},\quad N=0,1,2,\ldots,
\end{equation}
contain a spider's web, which is sufficient to prove Theorem~\ref{genus1theorem}.

We assume then that there exists some positive integer~$N_0$ such that the set $I_{N_0}$ does {\it not} contain a spider's web, and show that this assumption gives a contradiction. Since, by \eqref{strongminmodprop}, the sets $I_N$ have the property that
\begin{equation}\label{INnested}
I_{N_2}\subset I_{N_1},\quad\text{for }N_2>N_1\ge 0,
\end{equation}
our assumption implies that, for all $N\ge N_0$, the set $I_{N}$ does not contain a spider's web.

We now choose~$N\ge N_0$ so large that
\begin{equation}\label{Nprop1}
\text{the disc }D(0,M^{-1}(m^{N+1}(r)^{1/3})) \text{ contains a periodic cycle of } f,
\end{equation}
\begin{equation}\label{Nprop2}
m^{N+2}(r)^{1/3} \ge \max\{R_1, R_3, M(R_2),M(1), \exp(1600)\}
\end{equation}
and
\begin{equation}\label{Nprop3}
M(s)\ge s^{32},\qfor s\ge \left(m^{N+2}(r)\right)^{1/2}.
\end{equation}
%and
%\begin{equation}\label{Nprop4}
%m^{N+2}(r)^{1/3} \ge \max\{R_1, R_3, M(R_2),M(1), \exp(1600)\}.
%\end{equation}
Next we choose~$R$ so large that if $a_n = M^{-1}(m^{n+N}(r)^{1/3})$, then $a_n\le M^n(R)$ for $n\in\N$. Indeed, if we choose $R \ge m^{N-1}(r)$, then for all $n\in\N$ we have
\[
m^{n+N}(r)=m^{n+1}\left(m^{N-1}(r)\right)\le M^{n+1}(R),\quad\text{so}\quad a_n\le M^n(R).
\]
Thus such a choice of~$R$ is possible. Also, by \eqref{Nprop1}, the disc $D(0,a_1)$ contains a periodic cycle of~$f$.

Then the hypotheses of Lemma~\ref{Ifan} are satisfied for this sequence $(a_n)$. Therefore, since $I_N$ does not contain a spider's web, $I_N^c$ must have at least one unbounded component and hence there must be an unbounded curve $\Gamma_0 \subset I_N^c$.

Next note that if we increase $N$, then \eqref{Nprop1}--\eqref{Nprop3} remain true, as does the statement that $\Gamma_0\subset I_N^c$ by \eqref{INnested}.

We now increase~$N$ if necessary to ensure that the curve $\Gamma_0$ meets both the circles $C(M^{-1}(m^{N+2}(r)^{1/3}))$ and  $C(m^{N+2}(r))$. We can then choose a subcurve $\gamma_0$ of $\Gamma_0$ such that
\[
\gamma_0\subset \overline A(M^{-1}(m^{N+2}(r)^{1/3}),m^{N+2}(r))
\]
and
\[
\gamma_0 \text{ meets both } C(M^{-1}(m^{N+2}(r)^{1/3})) \text{ and } C(m^{N+2}(r)).
\]
The idea of the proof is to show that if such a curve $\gamma_0$ exists, then we can construct a sequence of curves $\gamma_n\subset f^n(\gamma_0)$ and positive sequences $(r_n)$ and $(L_n)$ such that, for $n \geq 0$, we have
\begin{equation}\label{contains}
f(\gamma_{n})\supset \gamma_{n+1},
\end{equation}
\begin{equation}\label{annulus}
\gamma_n\subset \overline A(M^{-1}(r_n^{1/L_n}),r_n),\text{ and }\gamma_n\text{ meets both }C(M^{-1}(r_n^{1/L_n})) \text{ and } C(r_n),
\end{equation}
where
\begin{equation}\label{rL}
r_n \geq m^{N+n+2}(r)\quad \mbox{and}\quad 2 \leq L_n \leq 3,
\end{equation}
and also
\begin{equation}\label{rL2}
L_{n+1}=3 \quad \mbox{or}\quad L_{n+1} \geq L_n(1 - 10/\sqrt{\log r_n})\mbox{ and } r_{n+1} \geq M(r_n^{1/2}) \geq r_n^{16}.
\end{equation}
(These conditions formalise what we mean by saying that the images of the curve~$\gamma_0$ experience `repeated radial stretching'.)

We can then deduce from~\eqref{contains},~\eqref{annulus},~\eqref{rL}, and~\eqref{rL2}, that there is a point $z_0\in \gamma_0$ such that, for $n\ge 0$,
\begin{equation}\label{gamman}
f^n(z_0)\in \gamma_n,\quad \text{so}\quad |f^n(z_0)|\ge M^{-1}( r_n^{1/3}).
\end{equation}
By \eqref{INdef} and \eqref{rL}, this implies that $z_0 \in I_N$ which is a contradiction since $z_0 \in \gamma_0 \subset I_N^c$.

To construct the sequences~$(L_n)$,~$(r_n)$ and~$(\gamma_n)$, we proceed as follows. Start by putting $r_0=m^{N+2}(r)$, so $\gamma_0$, $r_0$ and $L_0=3$ have the required properties.
Next, suppose that, for $k=0,1,\ldots, n$, we have chosen curves $\gamma_k$ and positive numbers $r_k$ and $L_k$ such that~\eqref{annulus} and~\eqref{rL} are satisfied with $n$ replaced by $k$ and, in addition,~\eqref{contains} and~\eqref{rL2} hold, with $n$ replaced by $k$ for $k=0,1,\ldots,n-1$. We then show that we can choose a curve $\gamma_{n+1}$ and positive numbers $r_{n+1}$ and $L_{n+1}$ so that~\eqref{contains} and~\eqref{rL2} hold, and~\eqref{annulus} and~\eqref{rL} hold with $n$ replaced by $n+1$.

We begin by putting
\begin{equation}\label{rnmax}
r_{n+1} = \max_{z \in \gamma_n}|f(z)|
\end{equation}
and let~$q$ be the largest integer such that $m^q(r) \leq r_n$. It follows from~\eqref{rL} that $q \geq N+n+2$. Also, it follows from~\eqref{annulus} that $\gamma_n$ must meet $C(m^q(r))$, since
\[
m^q(r)\ge M^{-1}(m^{q+1}(r)) > M^{-1}(r_n)\ge M^{-1}(r_n^{1/L_n}).
\]
Thus
\begin{equation}\label{rn1}
r_{n+1} \geq m^{q+1}(r) \geq m^{N+n+3}(r),
\end{equation}
so $r_{n+1}$ satisfies the first inequality in~\eqref{rL}.

Next we suppose that there exists $z \in \gamma_n$ with $|f(z)| \leq M^{-1}(r_{n+1}^{1/3})$. Then it is clear that there exists a curve $\gamma_{n+1}$ having the properties \eqref{contains}--\eqref{rL2}, with $L_{n+1} = 3$.

If such a~$z$ does not exist, then we must have
\begin{equation}\label{frn1}
|f(z)| > M^{-1}(r_{n+1}^{1/3}),\quad \mbox{for } z \in \gamma_n.
\end{equation}

We now put
\begin{equation}\label{eps}
\eps_n = 10/ \sqrt{\log r_n}
\end{equation}
and note that, by \eqref{Nprop2} and \eqref{rL}, we have
\begin{equation}\label{eps12}
\eps_n \le 1/2.
\end{equation}
We consider two cases. First suppose that
\begin{equation}\label{epsn}
r_{n+1} \geq M(r_n^{1 - \eps_n}).
\end{equation}
In this case we have $r_{n+1} > M(r_n^{1/2}) \geq r_n^{16}$, by \eqref{eps12}, \eqref{rL} and \eqref{Nprop3}. We put
\begin{equation}\label{Ln1}
L_{n+1} = L_n(1 - \eps_n)
\end{equation}
and note that $r_{n+1}$ and $L_{n+1}$ satisfy \eqref{rL2}. Also, by \eqref{Nprop2}, it follows from Lemma~\ref{prodL} that $L_{n+1} \geq 2$. Thus $L_{n+1}$ satisfies the second inequality in~\eqref{rL}.

We can now show that there exists a curve $\gamma_{n+1}$ with the required properties. First, it follows from~\eqref{annulus} that there exists $z \in \gamma_n$ with
\begin{equation}\label{gn1}
|f(z)| \leq r_n^{1/L_n}.
\end{equation}
 Also, it follows from~\eqref{epsn}, Lemma~\ref{convex} (in view of \eqref{Nprop2} and \eqref{rL}), and~\eqref{Ln1} that
\begin{equation}\label{gn2}
r_{n+1} \geq M(r_n^{1-\eps_n}) \geq M(r_n^{1/L_n})^{L_n(1-\eps_n)} = M(r_n^{1/L_n})^{L_{n+1}},
\end{equation}
so $r_n^{1/L_n}\le M^{-1}(r_{n+1}^{1/L_{n+1}})$. Together with~\eqref{gn1} and \eqref{rnmax}, this is sufficient to show that there exists a curve $\gamma_{n+1}$ satisfying~\eqref{contains} and~\eqref{annulus} with~$n$ replaced by~$n+1$.

It remains to consider the case when
\begin{equation}\label{large}
r_{n+1} < M(r_n^{1 - \eps_n}).
\end{equation}
We show that this leads to a contradiction and so cannot occur. A key fact needed to obtain this contradiction is that $f$ has the symmetry property
\begin{equation}\label{sym}
f(\overline{z}) = \overline{f(z)}, \quad\mbox{for } z \in \C.
\end{equation}

We will obtain a contradiction by applying Corollary~\ref{winding}, with
\[
t=r_n \quad\mbox{and}\quad \eps=\eps_n,
\]
to a curve $\gamma'_n$ meeting $C(r_n^{1- \eps_n})$ and $C(r_n)$, chosen such that $\gamma'_n \subset \{z: \Imag z \geq 0\}$ and
\begin{equation}\label{gamma}
\gamma'_n \subset \gamma_n \cup \gamma_n^*,
 \end{equation}
where $*$ denotes reflection in the real axis.

We check that the hypotheses of Corollary~\ref{winding} are satisfied. First we have
\[
r_n^{1-\eps_n}\ge r_n^{1/2} \ge R_1,
\]
by \eqref{Nprop2} and \eqref{rL}. Next, it follows from~\eqref{rnmax}, \eqref{large}, \eqref{sym} and~\eqref{gamma} that
\[
|f(z)| \leq M(r_n^{1-\eps_n}),\quad \mbox{for } z \in \gamma_n',
\]
and, from~\eqref{frn1}, \eqref{sym}, \eqref{gamma}, and also \eqref{rn1} and \eqref{Nprop2}, that
\begin{equation}\label{bound}
|f(z)| \geq M^{-1}(r_{n+1}^{1/3}) > 1 > 1/M(r_n^{1-\eps_n}),\quad \mbox{for } z \in \gamma_n'.
\end{equation}
Therefore, by Corollary~\ref{winding}, there exists a curve $\Gamma \subset \gamma'_n$ and points $z_0, z_0' \in \Gamma$ and such that
\begin{equation}\label{twopi}
 \Delta\!\arg(f(\Gamma);z_0,z'_0) \ge 2\pi.
 \end{equation}
We also have, from~\eqref{bound}, that
\[
|f(z)| \geq M^{-1}(r_{n+1}^{1/3}), \quad\mbox{for } z \in \Gamma.
\]
Together with~\eqref{twopi},~\eqref{Nprop2}, \eqref{sym} and Lemma~\ref{AR-component}, this implies that
\[
f(\Gamma) \cap A_{M^{-1}(r_{n+1}^{1/3})/2}(f) \neq \emptyset.
\]
So, by~\eqref{sym} and \eqref{gamma}, there exists $z_n \in \gamma_n$ such that
\begin{equation}\label{fznfast}
|f(z_n)| \geq M^{-1}(r_{n+1}^{1/3})\quad\mbox{and}\quad f(z_n) \in A_{M^{-1}(r_{n+1}^{1/3})/2}(f).
\end{equation}
From the construction of the curves $\gamma_k$, for $1 \leq k \leq n$, it follows that there exists $z_0 \in \gamma_0$ such that $f^k(z_0) \in \gamma_k$, for $0 \leq k \leq n$, and $f^n(z_0) = z_n$. Therefore, by \eqref{annulus}, \eqref{rL} and \eqref{rn1}, we have
\[
|f^k(z_0)| \geq M^{-1}(r_{k}^{1/3}) \geq M^{-1}(m^{N+k+2}(r)^{1/3}),\quad \mbox{for } 0 \leq k \leq n+1,
\]
and, by \eqref{fznfast}, \eqref{ARfdef}, \eqref{Nprop3} and then \eqref{rn1}, we have
\[
|f^{n+2}(z_0)| = |f^2(z_n)| \geq M(M^{-1}(r_{n+1}^{1/3})/2) \geq M^{-1}(r_{n+1}^{1/3}) \geq M^{-1}(m^{N+n+3}(r)^{1/3}).
\]
By similar reasoning, we have, for $k > n+2$,
\begin{align*}
|f^{k}(z_0)| &  = |f^{k-n-1}(f(z_n))| \geq M^{k-n-1}(M^{-1}(r_{n+1}^{1/3})/2)\ge M^{n+k-2}(M^{-1}(r_{n+1}^{1/3}))\\
&\geq M^{k-n-2}(M^{-1}(m^{N+n+3}(r)^{1/3})) = M^{-1}(M^{k-n-2}(m^{N+n+3}(r)^{1/3})),
\end{align*}
and hence, by \eqref{Nprop3},
\[
|f^{k}(z_0)|\geq M^{-1}(M^{k-n-3}(m^{N+n+3}(r))) \geq M^{-1}(m^{N+k}(r)) > M^{-1}(m^{N+k}(r)^{1/3}).
\]
Together these three estimates show that $z_0 \in I_N$, which is a contradiction since $z_0 \in \gamma_0 \subset I_N^c$. It follows that the case considered in~\eqref{large} cannot occur and this completes the proof.
\end{proof}

\section{Spiders' webs in $V(f)$ and $Q(f)$}
\label{VandQ}
\setcounter{equation}{0}

In this section we prove two results which follow from Theorem~\ref{genus1theorem} and which show that, for many classes of real transcendental entire functions with only real zeros, the sets $V(f)$ and $Q(f)$ each contain a spider's web. These subsets of the escaping set were considered in earlier papers in connection with Eremenko's conjecture and a conjecture of Baker. In particular, we show here that for all real entire functions of order less than 1/2 with only real zeros, the quite fast escaping set $Q(f)$ contains a spider's web.

We introduced the set $V(f)$ in \cite{ORS17}. This is the set of points whose iterates grow at least as fast as the iterated minimum modulus; that is,
\[
V(f) = \{ z: \mbox{ there exists } L \in \N \mbox{ such that } |f^{n+L}(z)| \geq \tilde{m}^n(R) \mbox{ for all } n \in \N \},
\]
where
\[
\tilde{m}(r) = \max_{0\leq s \leq r} m(s),\quad r>0,
\]
and $R>0$ is any number such that $\tilde{m}(r) > r$ for $r \geq R$. The existence of such an~$R$ follows from \eqref{minmodprop} by a result about escaping points of real functions \cite[Theorem~2.1]{ORS17}, which we have already used at the start of Section~\ref{genus1} and which we state here in full for the reader's convenience.
\begin{lemma}
\label{fullequiv}
Let $ \varphi: [0, \infty) \to [0, \infty) $ be continuous and put $\tilde{\varphi}(t) = \max_{0\leq s \leq t} \varphi(s)$, $t>0$ . Then the following statements are equivalent.
\begin{enumerate}[(a)]
\item There exists $ t > 0 $ such that $ \varphi^n(t) \to \infty $ as $ n \to \infty. $
\item There exists $ t' > 0 $ such that the set $ \lbrace \varphi^n(t'): n \in \N_0 \rbrace $ is unbounded.
\item There exists $ T>0 $ such that $ \widetilde{\varphi}(t) > t $, for $ t \geq T $ (or equivalently there exists~$t>0$ such that $\widetilde{\varphi}^n(t) \to \infty$ as $n\to\infty$).
\item There exist $ t \geq T > 0 $ such that
\[ \varphi^n(t) \textrm{ and } \widetilde{\varphi}^{\, n}(T) \textrm{ increase strictly with } n \textrm{ to } \infty, \]
and
\[ \varphi^n(t) \in [ \, \widetilde{\varphi}^{\, n}(T) , \widetilde{\varphi}^{\, n+1}(T) \, ], \qfor n \in \N_0. \]
\item There exists a sequence $ (t_n) $ of positive real numbers such that $ t_n \to \infty $ as $ n \to \infty $ and
\begin{equation*}
\varphi(t_n) \geq t_{n+1}, \quad \textrm{ for }  n \in \N_0.
\end{equation*}
\end{enumerate}
\end{lemma}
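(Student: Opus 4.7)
The plan is to prove the chain (a) $\Rightarrow$ (b) $\Rightarrow$ (c) $\Rightarrow$ (d) $\Rightarrow$ (a), and close the loop through (e) via (d) $\Rightarrow$ (e) $\Rightarrow$ (c). Four of these six steps are essentially routine: (a) $\Rightarrow$ (b) is trivial; (d) $\Rightarrow$ (e) follows on setting $t_n:=\varphi^n(t)$; and (d) $\Rightarrow$ (a) is immediate from the lower bound $\varphi^n(t)\ge\widetilde{\varphi}^{\,n}(T)$ built into (d), together with $\widetilde{\varphi}^{\,n}(T)\to\infty$.

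For (b) $\Rightarrow$ (c) and (e) $\Rightarrow$ (c) I would argue by contrapositive, exploiting the monotonicity of $\widetilde{\varphi}$. The key observation is that if $\widetilde{\varphi}(\tau)\le\tau$, then $\varphi(s)\le\widetilde{\varphi}(\tau)\le\tau$ for every $s\in[0,\tau]$, so any trajectory starting in $[0,\tau]$ stays trapped there. Failure of (c) supplies arbitrarily large such $\tau$, and choosing $\tau\ge t'$ traps the supposedly unbounded orbit $\{\varphi^n(t')\}$ of (b); in case (e), choosing $\tau\ge t_0$ yields inductively $t_{n+1}\le\varphi(t_n)\le\tau$, contradicting $t_n\to\infty$.

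The substance of the argument is (c) $\Rightarrow$ (d), and this is the step I expect to be the main obstacle. Assume $\widetilde{\varphi}(s)>s$ for $s\ge T$. Then $\widetilde{\varphi}^{\,n}(T)$ is strictly increasing, and continuity of $\widetilde{\varphi}$ rules out a finite limit (any such limit would be a fixed point), so $\widetilde{\varphi}^{\,n}(T)\to\infty$. The task is to produce a single orbit of $\varphi$ that tracks this reference sequence. The crux is the covering lemma
\[
\varphi\bigl([\widetilde{\varphi}^{\,n-1}(T),\widetilde{\varphi}^{\,n}(T)]\bigr)\supseteq [\widetilde{\varphi}^{\,n}(T),\widetilde{\varphi}^{\,n+1}(T)],\quad n\ge 1,
\]
proved as follows: the maximum defining $\widetilde{\varphi}^{\,n+1}(T)$ is attained at some $\sigma_n\in[0,\widetilde{\varphi}^{\,n}(T)]$, and this $\sigma_n$ must satisfy $\sigma_n\ge\widetilde{\varphi}^{\,n-1}(T)$, since otherwise $\widetilde{\varphi}^{\,n+1}(T)=\varphi(\sigma_n)\le\widetilde{\varphi}^{\,n}(T)$; combined with the trivial bound $\varphi(\widetilde{\varphi}^{\,n-1}(T))\le\widetilde{\varphi}^{\,n}(T)$, the intermediate value theorem applied on $[\widetilde{\varphi}^{\,n-1}(T),\sigma_n]$ then delivers the covering.

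With the covering in hand, a finite backward iteration produces, for each $N$, a point $t^{(N)}\in[T,\widetilde{\varphi}(T)]$ with $\varphi^k(t^{(N)})\in[\widetilde{\varphi}^{\,k}(T),\widetilde{\varphi}^{\,k+1}(T)]$ for $0\le k\le N$. Since $[T,\widetilde{\varphi}(T)]$ is compact, $(t^{(N)})$ has a convergent subsequence $t^{(N_j)}\to t$; continuity of each iterate $\varphi^k$ then extends the tracking relation to all $n\in\N_0$. Strict monotonicity of $\varphi^n(t)$ is now automatic, since an equality $\varphi^n(t)=\varphi^{n+1}(t)$ would make $\varphi^n(t)$ a fixed point of $\varphi$, contradicting $\varphi^k(t)\ge\widetilde{\varphi}^{\,k}(T)\to\infty$.
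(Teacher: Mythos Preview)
The paper does not prove this lemma at all: it is quoted verbatim from \cite[Theorem~2.1]{ORS17} and stated ``for the reader's convenience'' with no argument given. So there is nothing in the present paper to compare your proof against.

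That said, your argument is correct and is essentially the natural one. The routine implications are fine. In (b)$\Rightarrow$(c) and (e)$\Rightarrow$(c) the trapping argument works because $\widetilde{\varphi}(\tau)\le\tau$ makes $[0,\tau]$ forward-invariant under $\varphi$. In (c)$\Rightarrow$(d), the covering relation $\varphi(I_n)\supseteq I_{n+1}$ with $I_n=[\widetilde{\varphi}^{\,n-1}(T),\widetilde{\varphi}^{\,n}(T)]$ is the heart of the matter, and your justification is sound: the maximiser $\sigma_n$ of $\varphi$ on $[0,\widetilde{\varphi}^{\,n}(T)]$ cannot lie in $[0,\widetilde{\varphi}^{\,n-1}(T)]$ (indeed must lie strictly above $\widetilde{\varphi}^{\,n-1}(T)$, since $\varphi(\widetilde{\varphi}^{\,n-1}(T))\le\widetilde{\varphi}^{\,n}(T)<\widetilde{\varphi}^{\,n+1}(T)$), and the intermediate value theorem then gives the covering. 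The pull-back/compactness construction of $t$ is standard, and your fixed-point argument for strict monotonicity of $\varphi^n(t)$ is correct. One small point you leave implicit is the parenthetical equivalence in (c); this follows by the same trapping argument applied to the non-decreasing continuous function $\widetilde{\varphi}$ in place of $\varphi$.
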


By the equivalence of parts~(a) and~(c) of Lemma~\ref{fullequiv}, property \eqref{minmodprop} holds if and only if there exists~$R>0$ such that $\tilde{m}(r) > r$ for $r \geq R$. Moreover, in this situation there must exist $r>0$ such that $m^n(r) > \tilde{m}^n(R)$ for $n \in \N$ by part~(d).

As mentioned in the introduction, we also showed in \cite{ORS17} that there are large classes of functions for which there exist $r>R>0$ such that
\[
m^n(r) > M^n(R) \mbox{ and } M^n(R) \to \infty \mbox{ as } n \to \infty.
\]
For such functions, $V(f)$ is equal to the fast escaping set $A(f)$ and this set is a spider's web. It is known, however, that this equality is not true in general for functions such that \eqref{minmodprop} holds, even for functions of order less than 1/2. We asked in~\cite{ORS17} whether $V(f)$ contains a spider's web for all functions such that \eqref{minmodprop} holds.

As a consequence of Theorem~\ref{genus1theorem}, we have the following partial result in this direction.

\begin{theorem}\label{V(f)}
Let $f$ be a real transcendental entire function of genus at most~1 with only real zeros. If there exist $R>0$ and $p \in \N$ such that
\begin{equation}\label{Mandtildem}
\tilde{m}^p(s) \geq M(s),\quad \text{for } s \geq R,
\end{equation}
then $V(f)$ contains a spider's web.
\end{theorem}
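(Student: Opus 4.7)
The plan is to apply Theorem~\ref{genus1theorem} to produce a spider's web of points escaping at the iterated-minimum-modulus rate, and then use hypothesis \eqref{Mandtildem} to upgrade this escape rate to the one required for membership in $V(f)$.

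First I would fix $R>0$ so large that (i) $\tilde m(s)>s$ for $s\ge R$ (such an $R$ exists by the equivalence of (a) and (c) in Lemma~\ref{fullequiv}, since \eqref{Mandtildem} clearly implies \eqref{minmodprop}); (ii) $M(s)\ge s^{3}$ for $s\ge R$ (which holds for every {\tef}, because $\log M(s)/\log s\to\infty$); and (iii) $R\ge|f(0)|^{3}$. As noted in the discussion following Lemma~\ref{fullequiv}, part~(d) of that lemma then supplies some $r>0$ for which $(m^n(r))$ is strictly increasing to infinity and
\[
m^n(r)\ge\tilde m^n(R), \qfor n\in\N_0.
\]
In particular $m(r)\ge\tilde m(R)>R\ge|f(0)|^{3}$, so the hypotheses of Theorem~\ref{genus1theorem} are met. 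That result yields a spider's web $W$ contained in the set
\[
E = \bigl\{z:|f^n(z)|\ge M^{-1}(m^n(r)^{1/3})\text{ for all }n\in\N\bigr\}.
\]

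The core of the argument is then to verify that $E\subset V(f)$, with the uniform shift $L=2p$. For $z\in E$ and $n\in\N_0$, applying \eqref{Mandtildem} twice and using (ii) gives
\[
\tilde m^{n+2p}(R)=\tilde m^{p}\!\bigl(\tilde m^{n+p}(R)\bigr)\ge M\!\bigl(\tilde m^{n+p}(R)\bigr)\ge \tilde m^{n+p}(R)^{3}.
\]
Taking cube roots and applying \eqref{Mandtildem} once more yields
\[
m^{n+2p}(r)^{1/3} \ge \tilde m^{n+2p}(R)^{1/3} \ge \tilde m^{n+p}(R) \ge M\!\bigl(\tilde m^n(R)\bigr),
\]
so that, after applying $M^{-1}$,
\[
|f^{n+2p}(z)|\ge M^{-1}\!\bigl(m^{n+2p}(r)^{1/3}\bigr)\ge\tilde m^n(R).
\]
This shows $z\in V(f)$ with shift $L=2p$. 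Hence $W\subset V(f)$, and $V(f)$ contains a spider's web.

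The main obstacle to overcome is the coordination of the two index constants: the $r$ that feeds Theorem~\ref{genus1theorem} must be matched to the $R$ appearing in the definition of $V(f)$. Lemma~\ref{fullequiv}(d) is precisely the tool that makes this simultaneous choice possible, by interlacing the sequences $m^n(r)$ and $\tilde m^n(R)$. Once the indices are synchronised, the strong hypothesis \eqref{Mandtildem} provides exactly the slack required: $p$ iterates are spent converting a single $M$ into an iterated $\tilde m^{\,p}$, and $p$ further iterates absorb the cube root that Theorem~\ref{genus1theorem} introduces.
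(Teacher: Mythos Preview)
Your proof is correct and follows essentially the same route as the paper's: choose $r$ via Lemma~\ref{fullequiv}(d) so that $m^n(r)\ge\tilde m^n(R)$, invoke Theorem~\ref{genus1theorem}, and then combine two applications of \eqref{Mandtildem} with a single use of $M(s)\ge s^3$ to absorb the cube root. The only cosmetic difference is bookkeeping: the paper shifts the input to Theorem~\ref{genus1theorem} by working with $m^{n+N}(r)$ for large $N$, whereas you keep the original $r$ and place the shift in the parameter $L=2p$ of $V(f)$; your version is arguably a little cleaner since it front-loads the growth condition $M(s)\ge s^3$ by enlarging $R$ once.
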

\begin{proof}
First, it follows from \eqref{Mandtildem} that we can assume that $\tilde m^n(R)\to\infty$ as $n\to\infty$. Hence, by the equivalence of parts~(c) and~(d) of Lemma~\ref{fullequiv}, there exists $r \geq R$ such that $m^n(r)$ is strictly increasing and $m^n(r) \geq \tilde{m}^n(R)$ for $n \in \N$.

If we now take $N \geq 2p+2$, sufficiently large that $M(r)\ge r^3$ for $r\ge M(\tilde m^{N-2p}(R))$, then, for $n \in \N$, we have
\begin{align*}
M^{-1}(m^{n+N}(r)^{1/3}) &\geq M^{-1}(\tilde{m}^{n+N}(R)^{1/3})\\
&\geq M^{-1}((M^2(\tilde{m}^{n+N-2p}(R))^{1/3})\\
&\geq M^{-1}(M(\tilde{m}^{n+N-2p}(R))\\
&= \tilde{m}^{n+N-2p}(R).
\end{align*}
So it follows from Theorem~\ref{genus1theorem} that $V(f)$ contains a spider's web.
\end{proof}

We end this section by showing that, in the special case that~$f$ is a real transcendental entire function with only real zeros and order less than 1/2, Theorem~\ref{V(f)} together with the cos\,$\pi\rho$ theorem imply that the {\it quite fast escaping set} $Q(f)$ contains a spider's web. Recall that, for each $\eps > 0$, we define
\[
Q_{\eps}(f) = \{ z: \mbox{ there exists } L \in \N \mbox{ such that } |f^{n+L}(z)| \geq \mu_{\eps}^n(R) \mbox{ for all } n \in \N \},
\]
where
\[
\mu_{\eps}(r) = M(r)^{\eps},\quad r>0,
\]
and $R>0$ is so large that $\mu_{\eps}(r)>r$, for $r\ge R$, and then put
\[
Q(f) = \bigcup_{\eps \in (0,1)} Q_{\eps}(f).
\]

In an earlier paper \cite{RS13c}, we saw other families of functions of order less than 1/2 for which $Q(f)$ contains a spider's web. For many of these, $Q(f) = A(f)$ (see \cite{RS13a}), but there are examples for which $Q(f)$ contains $A(f)$ strictly (see \cite{RS13b}).

We know of no examples of functions of order less than 1/2 for which $Q(f)$ does not contain a spider's web, and we make the following conjecture.

\begin{conjecture}
If~$f$ is a transcendental entire function of order less than 1/2, then the quite fast escaping set $Q(f)$ contains a spider's web, from which it follows that $I(f)$ is a spider's web and hence is connected.
\end{conjecture}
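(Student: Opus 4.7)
The plan is to emulate the proof strategy of Theorem~\ref{genus1theorem}, replacing its two Laguerre--P\'olya-specific ingredients by tools that apply to arbitrary {\tef}s of order $\rho=\rho(f)<1/2$. By Wiman's $\cos\pi\rho$ theorem there exist $\alpha\in(0,\cos(\pi\rho))$ and an unbounded sequence $(r_k)$ of \emph{good radii} along which $m(r_k)\ge M(r_k)^\alpha$; in particular property~\eqref{minmodprop} holds automatically. Fix some sufficiently small $\eps\in(0,\alpha/2)$. Since $Q(f)=\bigcup_{\eps\in(0,1)} Q_\eps(f)$, it is enough to exhibit a spider's web inside $Q_\eps(f)$, and this is reduced by Lemma~\ref{Ifan} (applied with $a_n=\mu_\eps^n(R)\le M^n(R)$ for a suitable $R>0$) to showing that the complement of the associated $I(f,(a_n))$-type set has at least one bounded component.

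Next, I would run the contradiction argument of the proof of Theorem~\ref{genus1theorem}: assume that the relevant complement has only unbounded components, extract a long unbounded curve $\Gamma_0$ there, and take a sub-arc $\gamma_0$ spanning an annulus whose outer radius lies among the good radii~$r_k$. Then I would construct inductively curves $\gamma_n\subset f^n(\gamma_0)$ and radii $R_n\to\infty$ so that at each step either (a)~$\gamma_n$ experiences \emph{radial stretching} along a good radius, producing $R_{n+1}\ge M(R_n^{1-\eps_n})$ and outpacing the $\mu_\eps$-rate, or (b)~$f$ stays bounded on $\gamma_n$, in which case $f(\gamma_n)$ must wind about~$0$ enough to meet an unbounded component of $A_{R/2}(f)\subset Q_\eps(f)$ supplied by Lemma~\ref{AR-component}. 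Pulling the resulting orbit back to a point of $\gamma_0$ would contradict $\gamma_0\subset Q_\eps(f)^c$, and the telescoping bookkeeping of Lemma~\ref{prodL} transfers essentially verbatim.

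The main obstacle is step~(b): Corollary~\ref{winding} is no longer available, because its proof in~\cite{NRS} uses both the symmetry $f(\bar z)=\overline{f(z)}$ and the explicit Weierstrass expansion of functions in the Laguerre--P\'olya class. For general $f$ of order less than $1/2$ a natural substitute is the argument principle applied on a good circle $C(r_k)$: the image $f(C(r_k))$ is confined to the narrow annulus $\overline A(M(r_k)^\alpha,M(r_k))$ and winds about $0$ exactly $n(r_k,f)\to\infty$ times, so some sub-arc of it completes a full turn inside the prescribed annulus. The real difficulty is promoting this \emph{circle-winding} to \emph{curve-winding} of $f(\gamma_n)$ along arbitrary long arcs $\gamma_n$ that merely cross $C(r_k)$, without access to the reflection symmetry of~$f$. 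I would attempt this via a normal-families argument on the rescaled family $\{w\mapsto f(r_k w)/M(r_k)^\alpha\}_k$ along the good radii, or via the theory of multiply connected Fatou components, which is known to be especially rigid for order-less-than-$1/2$ entire functions. This is the step where genuinely new ideas, beyond the techniques of this paper, are likely to be required.
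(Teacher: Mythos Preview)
The statement you are attempting to prove is presented in the paper as a \emph{Conjecture}; it is not proved there. The paper establishes only the partial result Theorem~\ref{Q(f)}: under the additional hypothesis that $f$ is real with only real zeros, $Q(f)$ contains a spider's web. That proof proceeds by showing $V(f)\subset Q(f)$ via the $\cos\pi\rho$ theorem and then invoking Theorem~\ref{V(f)}, which in turn rests on Theorem~\ref{genus1theorem} and hence on the winding estimate Corollary~\ref{winding} --- exactly the ingredient you have identified as unavailable for general~$f$.

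Your diagnosis of the obstacle is correct. The winding result of~\cite{NRS} uses both the reflection symmetry $f(\bar z)=\overline{f(z)}$ and the explicit Weierstrass factorisation of functions in the Laguerre--P\'olya class to control $\Delta\arg f$ along curves in the upper half-plane; neither input survives for an arbitrary entire function of order less than~$1/2$. Your substitute --- the argument principle on a good circle $C(r_k)$ --- does yield $n(r_k,f)$ turns of $f(C(r_k))$ about~$0$ inside $\overline A(M(r_k)^{\alpha},M(r_k))$, but, as you yourself observe, there is no mechanism to force a long \emph{radial} arc $\gamma_n$ (which merely crosses $C(r_k)$) to inherit even one full turn, and without the symmetry one cannot reflect $\gamma_n$ into a half-plane to apply a half-plane estimate. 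Neither the normal-families rescaling nor the multiply connected Fatou component theory you mention is currently known to supply such a curve-winding bound in this generality. In short, what you have written is not a proof but a programme with an honestly flagged gap, and that gap is precisely why the paper states the result as a conjecture rather than a theorem.
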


We have the following partial result in this direction.

\begin{theorem}\label{Q(f)}
Let $f$ be a transcendental entire function of order less than~$1/2$. Then $V(f) \subset Q(f)$. If, in addition,~$f$ is
real with only real zeros, then $Q(f)$ contains a spider's web.
\end{theorem}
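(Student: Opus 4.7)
The plan is to handle the two assertions separately, with the bridge being the $\cos\pi\rho$ theorem, which for any transcendental entire function of order $\rho<1/2$ gives
\[
\liminf_{r\to\infty}\frac{\log m(r)}{\log M(r)}\ge \cos\pi\rho>0.
\]
Fixing any $\beta\in(0,\cos\pi\rho)$, this yields $R_*>0$ such that $m(r)\ge M(r)^{\beta}=\mu_{\beta}(r)$ for all $r\ge R_*$, and in particular $\tilde m(r)\ge \mu_{\beta}(r)$ for such $r$. Since $f$ is transcendental, $M(r)^{\beta}/r\to\infty$, so $\mu_{\beta}(r)>r$ for $r\ge R_*$ (enlarging $R_*$ if necessary); thus an easy induction using the monotonicity of $\tilde m$ and $\mu_{\beta}$ gives $\tilde m^n(R_*)\ge \mu_{\beta}^n(R_*)$ for all $n\in\N$. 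Consequently any $z\in V(f)$ satisfies $|f^{n+L}(z)|\ge \tilde m^n(R_*)\ge \mu_{\beta}^n(R_*)$ for some $L$ and all $n$, so $z\in Q_{\beta}(f)\subset Q(f)$, proving $V(f)\subset Q(f)$.

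For the second statement, since $\rho(f)<1/2$ the genus of $f$ is $0$, so Theorem~\ref{V(f)} will apply once we verify condition~\eqref{Mandtildem}. The plan is to show that $p=2$ suffices. Starting from $\tilde m(r)\ge M(r)^{\beta}$ and using that $\tilde m$ is non-decreasing, for $r$ large enough that $M(r)^{\beta}\ge R_*$ we obtain
\[
\tilde m^2(r)\ge \tilde m\bigl(M(r)^{\beta}\bigr)\ge M\bigl(M(r)^{\beta}\bigr)^{\beta}.
\]
Writing $M(r)^{\beta}=r^{c(r)}$ with $c(r)=\beta\log M(r)/\log r$, and noting that $c(r)\to\infty$ since $f$ is transcendental, Lemma~\ref{convex} gives $M(r^{c(r)})\ge M(r)^{c(r)}$ for all sufficiently large $r$. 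Therefore
\[
\tilde m^2(r)\ge M(r)^{\beta c(r)}=M(r)^{\beta^2\log M(r)/\log r}\ge M(r)
\]
whenever $\beta^2\log M(r)\ge\log r$, which holds for all large $r$ by the transcendence of $f$. This establishes~\eqref{Mandtildem} with $p=2$.

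With this in hand, Theorem~\ref{V(f)} yields that $V(f)$ contains a spider's web; combined with the inclusion $V(f)\subset Q(f)$ proved above, this shows $Q(f)$ contains a spider's web. The only mildly delicate step is the double iteration in verifying~\eqref{Mandtildem}, where one has to balance the exponent produced by the $\cos\pi\rho$ bound against the convexity gain from Lemma~\ref{convex}; everything else is bookkeeping.
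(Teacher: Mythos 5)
Your derivation of the key inequality from the $\cos\pi\rho$ theorem is wrong, and this is a genuine gap rather than a cosmetic one. The $\cos\pi\rho$ theorem is a $\limsup$ (more precisely, a logarithmic-density) statement, not a $\liminf$ statement: it does not give $m(r)\ge M(r)^{\beta}$ for \emph{all} large $r$. Indeed that claim is false for every function covered by the theorem, since a transcendental entire function of order less than $1/2$ has infinitely many zeros (by the Hadamard factorisation, finitely many zeros and order $<1$ would force $f$ to be a polynomial times a constant exponential), so $m(r)=0$ for infinitely many $r$. Nor can the step be repaired simply by replacing $m$ by $\tilde m$: what the correct (Besicovitch/Barry) form of the theorem yields is that there exists $\varepsilon\in(0,1)$ such that for all large $r$ there is some $s\in(r^{\varepsilon},r)$ with $m(s)\ge M(r^{\varepsilon})$, hence only $\tilde m(r)\ge M(r^{\varepsilon})$; since $M(r^{\varepsilon})\le M(r)^{\varepsilon}$ by Lemma~\ref{convex}, this is genuinely weaker than your $\tilde m(r)\ge M(r)^{\beta}$. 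For a function of positive order $\rho<1/2$, with $\log M(r)\asymp r^{\rho}$, the density statements allow the ``good'' radii to miss an entire interval $(r^{1-\delta},r)$, in which case $\tilde m(r)$ is only controlled by $M(r^{1-\delta})$, which is eventually smaller than \emph{every} fixed power $M(r)^{\beta}$; so your intermediate inequality cannot be extracted from the theorem, and your verification of \eqref{Mandtildem} with $p=2$, which leans on it, does not stand as written.

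The remainder of your plan is sound and, once the bound is corrected, collapses essentially onto the paper's argument: from $\tilde m(r)\ge M(r^{\varepsilon})=\mu_{\varepsilon}(r^{\varepsilon})^{1/\varepsilon}$ one gets by induction $\tilde m^{n}(r)\ge\bigl(\mu_{\varepsilon}^{n}(r^{\varepsilon})\bigr)^{1/\varepsilon}\ge\mu_{\varepsilon}^{n}(r^{\varepsilon})$ for $n\in\N$, which gives $V(f)\subset Q_{\varepsilon}(f)\subset Q(f)$; and the case $n=2$, namely $\tilde m^{2}(r)\ge M\bigl(M(r^{\varepsilon})^{\varepsilon}\bigr)$, combined with Lemma~\ref{convex} and the fact that $M(s)\ge s^{k}$ for any fixed $k$ and all large $s$, gives \eqref{Mandtildem} with $p=2$, after which Theorem~\ref{V(f)} applies exactly as you intend (your observation that the genus is $0$ is correct). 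So keep your structure, but replace the false pointwise lower bound on $m$ by the correct statement $\tilde m(r)\ge M(r^{\varepsilon})$ and carry the exponent on the \emph{argument} of $M$ rather than on $M$ itself throughout the bookkeeping.
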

\begin{proof}
Let~$f$ be a transcendental entire function of order less than 1/2. It follows from the $\cos\pi \rho$ theorem (see \cite{iB58}, \cite{Ba63} or \cite[page~331]{wH89}) that there exists $\eps \in (0,1)$ such that, for sufficiently large $r > 0$,
\[
\text{there exists }s\in (r^{\eps},r)\text{ such that } m(s)\ge M(r^{\eps}).
\]
Therefore, for sufficiently large $r>0$, we have
\begin{equation}\label{mtildeM}
\tilde{m}(r) \geq  M(r^{\eps}) \text{ and hence } \tilde{m}^n(r) \ge (\mu_{\eps}^n(r^{\eps}))^{1/\eps}\ge \mu_{\eps}^n(r^{\eps}),\qfor n\in\N.
\end{equation}
Hence $V(f) \subset Q(f)$. The fact that $Q(f)$ contains a spider's web now follows from Theorem~\ref{V(f)}, by using \eqref{mtildeM} with $n=2$ and Lemma~\ref{convex}.
\end{proof}

\section{Functions of genus at least~2}
\label{ge2}
\setcounter{equation}{0}
In this section we complete the proof of Theorem~\ref{SW} by showing that if~$f$ is a real {\tef} of finite order with only real zeros and genus at least~2, then its minimum modulus
\[
m(r)\to 0\;\text{ as } r\to \infty,
\]
so~$f$ does not satisfy \eqref{minmodprop}. Actually, we prove the following much stronger result.
\begin{theorem}\label{main2}
Let~$f$ be a {\tef} of finite order with only real zeros and genus at least~$2$. Then
\begin{itemize}
\item[(a)] there exists $\theta\in [0,2\pi]$ such that
\[
f(re^{i\theta})\to 0 \;\text{ as } r\to\infty;
\]
\item[(b)] 0 is a deficient value of~$f$.
\end{itemize}
Both {\rm (a)} and {\rm (b)} imply that $m(r)\to 0$ as $r\to\infty$, so $f$ does not satisfy \eqref{minmodprop}.
\end{theorem}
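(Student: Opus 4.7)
The last two implications are immediate. If (a) holds then $m(r)\leq|f(re^{i\theta})|\to 0$ along the ray; if (b) holds then the definition of the Nevanlinna deficiency gives $\tfrac{1}{2\pi}\int_0^{2\pi}\log^+|1/f(re^{i\theta})|\,d\theta\geq\tfrac{1}{2}\delta(0,f)\,T(r,f)\to\infty$, so that $\max_\theta\log^+|1/f(re^{i\theta})|\to\infty$, forcing $m(r)\to 0$. Only (a) and (b) therefore require proof.

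By the Hadamard factorisation \eqref{Hadamard}, write $f(z)=z^n e^{Q(z)}P(z)$ where $P(z)=\prod_k E(z/a_k,m)$ is the canonical product over the real nonzero zeros $(a_k)$, so that $\max(m,\deg Q)\geq 2$. Let $\lambda=\rho(P)\in[m,m+1]$ be the exponent of convergence of $(a_k)$ and $\rho=\rho(f)=\max(\deg Q,\lambda)\geq 2$. My plan for (a) is to show that the Phragm\'en--Lindel\"of indicator
\[
h_f(\theta)=\limsup_{r\to\infty}r^{-\rho}\log|f(re^{i\theta})|
\]
takes a strictly negative value at some angle. Because the $a_k$ are real, splitting $P=P_+P_-$ according to the signs of the $a_k$ and applying the classical indicator formulas of Pfluger and Levin for a canonical product with zeros on a single ray gives an explicit expression for $h_P(\theta)$: for non-integer $\lambda$ essentially of the form $\pi\Delta_\pm\cos\lambda(\pi-\theta)/\sin\pi\lambda$, with logarithmic modifications when $\lambda$ is an integer. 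For $\lambda\geq 2$ this expression takes strictly negative values on some arc of positive length. Likewise, when $d=\deg Q\geq 2$ the indicator $h_{e^Q}(\theta)=\Real(c_d e^{id\theta})$ (with $c_d$ the leading coefficient of $Q$) is negative on an arc. Combining by means of the upper bound $h_f\leq h_{e^Q}+h_P$ at the top order should yield $\theta_0$ with $h_f(\theta_0)<0$, whence $|f(re^{i\theta_0})|\leq\exp\bigl(\tfrac{1}{2}h_f(\theta_0)r^\rho\bigr)\to 0$.

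For (b), the $\rho$-trigonometric convexity of $h_f$ provides an arc of angles around $\theta_0$ on which $h_f(\theta)\leq\tfrac{1}{2}h_f(\theta_0)<0$; thus $\log|f(re^{i\theta})|\leq-cr^\rho$ on that arc for some $c>0$. Integrating $\log^+|1/f|$ over the arc gives $\tfrac{1}{2\pi}\int\log^+|1/f(re^{i\theta})|\,d\theta\geq c_1 r^\rho$, and since $T(r,f)\leq c_2 r^\rho$ for $f$ of finite order we conclude $\delta(0,f)\geq c_1/c_2>0$.

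The main obstacle will be the boundary case $\deg Q=\lambda$, in which $e^Q$ and $P$ contribute at the same order and their indicators could a priori cancel each other's sign changes. To handle this I would exploit the even symmetry $h_P(-\theta)=h_P(\theta)$ forced by the reality of the zeros (which makes $P$ have real coefficients), combined with the $2\pi/d$-periodicity of $h_{e^Q}$: a covering/pigeonhole argument on the angular intervals where each indicator is non-negative should show that their supports cannot together cover $[0,2\pi)$, guaranteeing the existence of $\theta_0$ with $h_f(\theta_0)<0$.
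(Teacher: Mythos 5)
The closing implications (that either (a) or (b) forces $m(r)\to0$) are fine, but the proof of (a) and (b) themselves rests on machinery whose hypotheses you have not secured, and the one case you identify as the ``main obstacle'' is exactly where the content of the theorem lies. The Pfluger--Levin formula $h_P(\theta)=\pi\Delta_{\pm}\cos\lambda(\pi-\theta)/\sin\pi\lambda$ is valid only when the zeros have an angular density $\Delta_{\pm}$ with respect to the order (completely regular growth), with further Lindel\"of-type conditions when $\lambda$ is an integer; none of this is available here, since the only hypotheses are that the zeros are real and the genus is at least $2$. For a general real zero sequence there is no explicit expression for $h_P$, and moreover the indicator with respect to the order $\rho$ need not even be a usable object: if $f$ has maximal type then $h_f$ can be $-\infty$ or $+\infty$ at relevant angles, and if $f$ has minimal type then $h_f\le 0$ everywhere and your goal ``$h_f(\theta_0)<0$ strictly'' may simply be unattainable even though $f\to0$ along a ray at a sub-$r^{\rho}$ rate; one would at least need proximate orders, which you do not invoke. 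The same issue undermines (b): the bound $T(r,f)\le c_2r^{\rho}$ fails for maximal type, so the deficiency estimate $\delta(0,f)\ge c_1/c_2$ is not justified as written. Finally, the case $\deg Q=\lambda$, where $e^{Q}$ and $P$ contribute at the same order and their indicators could cancel, is dispatched only by ``a covering/pigeonhole argument should show\dots''; this is precisely the hard case and no argument is given.

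For comparison, the paper avoids indicator theory entirely and works at the level of the primary factors. For (a), Lemma~\ref{lemma:E} produces an angle $\theta$ at which every factor satisfies $\log|E(Te^{i\theta},m)|\le0$ for all real $T$ and $\le -C|T|^{m}$ (or $-C|T|^{m-1}$ for $m$ odd) for $|T|$ large; since the $a_k$ are real, each $z/a_k$ lies on the line through $e^{i\theta}$, so these bounds apply to all factors simultaneously. The decisive point, which your indicator computation cannot see, is that the genus condition gives the divergence of $\sum_{|a_k|<\alpha r}|a_k|^{-m}$ (resp.\ $|a_k|^{-(m-1)}$), so $\log|P(re^{i\theta_0})|$ decays strictly faster than the crude bound $O(r^{m})$ (resp.\ $O(r^{m-1})$) on $\log|z^{n}e^{Q(z)}|$ along a suitable choice of $\theta_0\in\{\theta,\theta+\pi\}$; this resolves exactly the ``same order, possible cancellation'' case. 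The remaining case $\deg Q\ge m+1$ is handled by $\log|P(z)|=o(|z|^{m+1})$ together with the negativity of $\operatorname{Re}Q$ on arcs of total length $\pi$. For (b), the paper quotes Edrei--Fuchs--Hellerstein (Lemma~\ref{EFH1.2}) when $m\ge2$, and when $m\le1$, $\deg Q\ge2$ it compares $N(r)=o(r^{2})$ with $\liminf T(r)/r^{2}>0$ to get $\delta(0,f)=1$. If you wish to salvage your route you would have to rebuild it on proximate orders and prove the required negativity of the indicator without density assumptions, which essentially amounts to redoing the paper's estimates.
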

The conclusion that~0 is a deficient value states that the {\it defect} of~$f$ at 0,
\begin{equation}\label{defect}
\delta(0,f)= 1-\limsup_{r\to\infty}\frac{N(r)}{T(r)}>0.
\end{equation}
Here,
\[
N(r)=N(r,0,f)=\int_0^r \frac{(n(t,f)-n(0,f))}{t}\,dt,
\]
in which $n(t,f)$ is the number of zeros of~$f$ in $\{z:|z|\le t\}$ counted according to multiplicity, and
\[
T(r) = N(r)+ \frac{1}{2\pi}\int_0^{2\pi} \log^+(1/|f(re^{i\theta})|)\,d\theta,
\]
%The second term on the right of this equation is the so-called `proximity function' of~$f$ -- here we do not use the usual notation $m(r,f)$ for this function, since we already use this notation for the minimum modulus of~$f$.

where $\log^+t=\max\{\log t,0\}$. It is clear that if 0 is a deficient value of~$f$, then $m(r)\to 0$ as $r\to\infty$, since
\[
\liminf_{r\to\infty}\left(\frac{1}{2\pi T(r)}\int_0^{2\pi} \log^+(1/|f(re^{i\theta})|)\,d\theta\right)=\liminf_{r\to\infty}\left(1-\frac{N(r)}{T(r)}\right)>0,
\]
so
\[
\frac{1}{2\pi}\int_0^{2\pi} \log^+(1/|f(re^{i\theta})|)\,d\theta\to\infty\quad\text{as }r\to\infty.
\]

Theorem~\ref{main2} has the following corollary for functions with only non-negative zeros.
\begin{corollary}\label{cor-main2}
Let~$f$ be a {\tef} of finite order with only non-negative zeros and genus at least~$1$. Then
\begin{itemize}
\item[(a)] there exists $\theta\in [0,2\pi]$ such that
\[
f(re^{i\theta})\to 0 \;\text{ as } r\to\infty;
\]
\item[(b)] 0 is a deficient value of~$f$.
\end{itemize}
Hence $m(r)\to 0$ as $r\to\infty$, so $f$ does not satisfy \eqref{minmodprop}.
\end{corollary}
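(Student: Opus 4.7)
The natural strategy is to reduce the corollary to Theorem~\ref{main2} by applying it to $g(z) = f(z^2)$. Since the non-zero zeros $a_k$ of $f$ lie in $[0,\infty)$, the zeros of $g$ are the real numbers $\pm\sqrt{a_k}$, so $g$ is a transcendental entire function of finite order with only real zeros.

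The crucial step is to verify that $g$ has genus at least~$2$ whenever $f$ has genus at least~$1$. Writing the Hadamard representation $f(z) = z^n e^{Q(z)} \prod_k E(z/a_k, m)$ from~\eqref{Hadamard}, we have $g(z) = z^{2n} e^{Q(z^2)} \prod_k E(z^2/a_k, m)$, and the identity $E(z^2/a_k, m) = E(z/\sqrt{a_k}, 2m)\, E(-z/\sqrt{a_k}, 2m)$ (a direct calculation using that $z^j + (-z)^j$ vanishes for odd~$j$) regroups the product as one over the real zeros $\pm\sqrt{a_k}$ of~$g$. If $\deg Q \geq 1$, then $Q(z^2)$ has degree at least~$2$ and we are done. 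Otherwise $m \geq 1$, and the minimality of~$m$ in the Hadamard factorization of~$f$ forces $\sum_k a_k^{-m} = \infty$; rewriting this as $\sum_k|\pm\sqrt{a_k}|^{-2m} = \infty$ shows that the minimal integer $m'$ associated with the zeros of~$g$ must satisfy $m' \geq 2m \geq 2$. In either case the genus of~$g$ is at least~$2$.

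Now apply Theorem~\ref{main2} to~$g$. Part~(a) yields $\theta \in [0, 2\pi]$ with $g(re^{i\theta}) \to 0$ as $r \to \infty$; since $g(re^{i\theta}) = f(r^2 e^{2i\theta})$, setting $\phi = 2\theta$ gives a ray $\arg z = \phi$ on which $f(z) \to 0$, proving part~(a) of the corollary. For part~(b) I would use the standard Nevanlinna identity $T(r, g) = T(r^2, f)$ together with the elementary computation $N(r, 0, g) = N(r^2, 0, f)$ (which follows from $n(r, 0, g) = 2 n(r^2, 0, f)$ and the substitution $u = t^2$ in the integral defining~$N$) to conclude that $\delta(0, f) = \delta(0, g) > 0$. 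The final statement that $m(r) \to 0$, and hence that~$f$ does not satisfy~\eqref{minmodprop}, then follows from either~(a) or~(b) exactly as in the paragraph following Theorem~\ref{main2}. The main technical point is the genus bookkeeping described above; everything else is a routine transfer through the substitution $z \mapsto z^2$.
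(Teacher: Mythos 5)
Your proposal is correct and follows essentially the same route as the paper: apply Theorem~\ref{main2} to $g(z)=f(z^2)$, whose zeros are real and whose genus is at least~$2$, and then transfer the ray and the deficiency back to~$f$. Your genus bookkeeping via the identity $E(w^2,m)=E(w,2m)E(-w,2m)$ and the bound $m'\ge 2m$ simply spells out a step the paper asserts without proof, and your use of $T(r,g)=T(r^2,f)$ together with $N(r,0,g)=N(r^2,0,f)$ achieves the defect transfer in the same way as the paper's comparison of the counting and proximity functions of $g$ and $f$.
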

\begin{proof}
The function $g(z)=f(z^2)$ has finite order, its zeros all lie on the real axis, and its genus is at least~2. Hence, by Theorem~\ref{main2},~$g$ has limit~0 along a ray to~$\infty$, and~0 is a deficient value of~$g$. It follows immediately that~$f$ has limit~0 along a ray to~$\infty$, and also that~0 is a deficient value of~$f$, since, for $r\ge t>0$, we have
\[
n(t,g)=2n(t^2,f),\quad\text{so}\quad N(r,0,g)=N(r^2,0,f),
\]
and
\[
\frac{1}{2\pi}\int_0^{2\pi} \log^+(1/|g(re^{i\theta})|)\,d\theta=2\left(\frac{1}{2\pi}\int_0^{2\pi} \log^+(1/|f(r^2e^{i\theta})|)\,d\theta\right).\qedhere
\]
\end{proof}
The proof of Theorem~\ref{main2} uses the following two lemmas. The first, used in the proof of part~(b), is a major result of Edrei, Fuchs and Hellerstein \cite[Corollary~1.2]{EFH}.

\begin{lemma}\label{EFH1.2}
Suppose that~$f$ is an entire function with only real zeros, $a_n$ say, such that
\begin{equation}\label{zeros-order}
\sum_{n=1}^{\infty}\frac{1}{|a_n|^2} = \infty\quad\text{and}\quad \sum_{n=1}^{\infty}\frac{1}{|a_n|^\xi} < \infty,
\end{equation}
for some $\xi\in (2,\infty)$. Then~0 is a deficient value of~$f$.
\end{lemma}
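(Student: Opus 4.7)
The plan is to exploit the classical principle that a canonical product whose zeros lie on a single line must grow disproportionately fast off that line, forcing the Nevanlinna proximity function $m(r, 1/f) = \frac{1}{2\pi}\int_0^{2\pi}\log^+(1/|f(re^{i\theta})|)\,d\theta$ to carry a positive fraction of the characteristic $T(r)$ rather than being dominated by $N(r)$.

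First I would apply Hadamard factorisation to write $f(z) = z^k e^{Q(z)} P(z)$, where $P(z) = \prod_n E(z/a_n, p)$ is the canonical product of the non-zero real zeros. The hypothesis \eqref{zeros-order} forces the genus $p \geq 2$: divergence of $\sum |a_n|^{-2}$ rules out $p \leq 1$, while convergence of $\sum |a_n|^{-\xi}$ makes $p$ finite. Since $e^{Q(z)}$ contributes to $T(r, f)$ without altering $N(r, 0, f)$, a positive deficiency of $P$ at $0$ transfers to $f$, so it suffices to prove $\delta(0, P) > 0$.

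The core step is to balance the two sides of Nevanlinna's first fundamental theorem
\[
T(r, P) = N(r, 0, P) + m(r, 1/P) + O(1).
\]
From $\sum |a_n|^{-\xi} < \infty$ one obtains $N(r, 0, P) \leq K r^\tau$ up to slowly varying factors, where $\tau \in [2, \xi]$ is the exponent of convergence of the zero sequence. For the matching lower bound on $T(r, P)$ I would invoke the Phragm\'en--Lindel\"of--Edrei indicator theory for canonical products with zeros on a line: using the primary-factor expansion of $\log|E(w, p)|$ with $w = re^{i\theta}/a_n$ and real $a_n$, together with the genus condition $p \geq 2$, one shows that the indicator $h_P(\theta)$ is strictly positive on every compact subset of $\{\theta : \theta \notin \pi\Z\}$, yielding $T(r, P) \geq \kappa r^\tau + o(r^\tau)$ for some $\kappa$ strictly exceeding $K$. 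The displayed identity then forces $m(r, 1/P) \geq \eta \, T(r, P)$ for some $\eta > 0$ and all large $r$, which is $\delta(0, P) > 0$.

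The main obstacle is securing this lower bound on $T(r, P)$ with a constant strictly larger than the one appearing in the upper bound on $N(r)$. When $\tau$ is non-integer the classical P\'olya-type indicator formula, $h_P(\theta) = (c\pi/\sin(\pi\tau))\cos(\tau(\theta - \pi))$ for zeros of density $c t^\tau$ on one real ray, is explicit, and its angular average strictly exceeds the leading constant $c/\tau$ of $N(r)$ precisely when the genus is at least $2$. When $\tau$ is an integer the indicator acquires logarithmic anomalies, which one resolves by regularisation --- for instance approximating the zero sequence by a nearby one with non-integer exponent of convergence and passing to the limit. In either regime, the real-axis concentration of the zeros prevents cancellation in the integrated lower bound and so produces the required deficiency at $0$.
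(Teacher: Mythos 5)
The paper does not prove this lemma at all: it is quoted verbatim as Corollary~1.2 of Edrei, Fuchs and Hellerstein \cite{EFH}, a substantial theorem from value distribution theory, and the authors explicitly describe it as ``a major result'' that they import. So the relevant question is whether your sketch actually constitutes an independent proof. It does not.

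The decisive gap is in the ``core step''. Your lower bound $T(r,P)\ge \kappa r^{\tau}+o(r^{\tau})$ with $\kappa$ strictly exceeding the constant $K$ in $N(r,0,P)\le Kr^{\tau}$ is obtained from the Lindel\"of--P\'olya indicator formula $h_P(\theta)=(c\pi/\sin \pi\tau)\cos(\tau(\theta-\pi))$, which is valid only for zero sequences possessing a density, i.e.\ $n(t)\sim ct^{\tau}$ on a single ray. The hypotheses \eqref{zeros-order} give nothing of the sort: they only pin the exponent of convergence into $[2,\xi]$, and the counting function $n(t)$ may oscillate between entirely different orders of growth, may be split in an arbitrary way between the positive and negative axes, and in general neither $N(r)/r^{\tau}$ nor $T(r)/r^{\tau}$ has a limit. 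Consequently there is no single power $r^{\tau}$ against which both sides of $T=N+m(r,1/P)+O(1)$ can be compared, and the whole ``$\kappa>K$'' mechanism collapses. Your proposed repair for integer $\tau$ --- perturbing the zeros to a nearby sequence with non-integer exponent and ``passing to the limit'' --- is not an argument: deficiencies are limsup quantities and are notoriously discontinuous under perturbation of the zero set, so nothing passes to the limit. Even in the idealised regular one-ray case, the claim that the angular average of $h_P^{+}$ strictly exceeds $c/\tau$ ``precisely when the genus is at least $2$'' is asserted, not verified, and positivity of the indicator off the real axis controls where $P$ is \emph{large}, whereas deficiency of $0$ requires $P$ to be \emph{small} on an angular set of definite measure. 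A secondary, fixable issue is the transfer from $P$ to $f=z^{k}e^{Q}P$: since $T(r,f)$ can be dominated by either factor, one needs a short case analysis comparing $\deg Q$ with the growth of $N(r)$ rather than the one-line assertion that deficiency ``transfers''. In short, the statement is true but is a deep theorem; if you do not want to cite \cite{EFH}, you would need to reproduce their delicate integral estimates for $\int_0^{2\pi}\log^{+}(1/|f(re^{i\theta})|)\,d\theta$ for arbitrary real zero sequences satisfying \eqref{zeros-order}, which your sketch does not supply.
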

The second lemma, used in the proof of part~(a), concerns the asymptotic behaviour of Weierstrass primary factors of the form
\[
E(z,m)=(1-z)e^{z+\frac{z^2}{2}+\cdots+\frac{z^m}{m}},\quad m\ge 2.
\]
\begin{lemma}\label{lemma:E}
Given an integer $m\ge 2$, there exists an angle $\theta$ such that
\begin{equation} \label{theta-choice}
\begin{split}
&\bullet \mbox{ if~$m$ is even, then $\cos m\theta <0$ and $\cos(m+1)\theta = 0$;}  \\
&\bullet \mbox{ if~$m$ is odd, then $\cos (m-1)\theta <0$, $\cos m\theta=0$ and $\cos(m+1)\theta > 0$.}
\end{split}
\end{equation}
Moreover, for such $\theta$ and all $T\in \R$, we have
\begin{equation}\label{E-bound}
 |E(Te^{i\theta},m)| \le 1,
\end{equation}
and there exist $C, T_0>0$ such that, if $T\in\R$ and $|T|>T_0$, then
\begin{equation}\label{logE-bound}
\log|E(Te^{i\theta},m)| \le \begin{cases} -C|T|^m, &\mbox{if $m$ is even,} \\
-C|T|^{m-1}, &\mbox{if $m$ is odd.} \end{cases}
\end{equation}
\end{lemma}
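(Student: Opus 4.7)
The plan is to first exhibit an explicit angle $\theta$ satisfying \eqref{theta-choice}, and then to compute $\frac{d}{dT}\log|E(Te^{i\theta}, m)|$ in closed form so that both \eqref{E-bound} and \eqref{logE-bound} can be read off from the sign and size of the derivative. For existence of $\theta$, I would take $\theta = 3\pi/(2(m+1))$ if $m$ is even and $\theta = 3\pi/(2m)$ if $m$ is odd (so $m\ge 3$): in the even case $(m+1)\theta = 3\pi/2$ and $m\theta \in (\pi/2, 3\pi/2)$, while in the odd case $m\theta = 3\pi/2$, $(m-1)\theta \in [\pi, 3\pi/2)$, and $(m+1)\theta \in (3\pi/2, 2\pi]$, so all the trigonometric conditions are verified by inspection. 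Note also that whenever $\cos m\theta = 0$, the identity $\cos(m-1)\theta + \cos(m+1)\theta = 2\cos m\theta\cos\theta = 0$ forces $\cos(m-1)\theta$ and $\cos(m+1)\theta$ to have opposite signs, so the two sign conditions in the odd case are not independent.

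The main step is the derivative identity. Let $h(z) = \log(1-z) + \sum_{k=1}^m z^k/k$ be a local branch of $\log E(z, m)$, so that $\log|E(Te^{i\theta}, m)| = \operatorname{Re} h(Te^{i\theta})$. The elementary geometric sum $\sum_{k=1}^m z^{k-1} = (1-z^m)/(1-z)$ gives $h'(z) = -z^m/(1-z)$, and hence
$$\frac{d}{dT}\log|E(Te^{i\theta}, m)| = \operatorname{Re}\bigl(e^{i\theta}h'(Te^{i\theta})\bigr) = \frac{T^m\bigl(T\cos m\theta - \cos(m+1)\theta\bigr)}{|1-Te^{i\theta}|^2}.$$
In the even case the condition $\cos(m+1)\theta = 0$ reduces the numerator to $T^{m+1}\cos m\theta$, whose sign is that of $T$ times $\cos m\theta < 0$; in the odd case the condition $\cos m\theta = 0$ reduces it to $-T^m\cos(m+1)\theta$, which (since $m$ is odd and $\cos(m+1)\theta > 0$) also has the opposite sign to $T$. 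In both cases the derivative is positive on $(-\infty, 0)$ and negative on $(0, \infty)$, so $T \mapsto \log|E(Te^{i\theta}, m)|$ attains its maximum at $T = 0$, where its value is $\log|E(0, m)| = 0$. This proves \eqref{E-bound}.

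For \eqref{logE-bound}, I would integrate the closed-form derivative from $0$ to $T$, using the substitution $s \mapsto -u$ to handle $T < 0$. Since $|1 - Te^{i\theta}|^2 \sim T^2$ as $|T| \to \infty$, in the even case the integrand is asymptotic to $T^{m-1}\cos m\theta$, giving $\log|E(Te^{i\theta}, m)| \sim (\cos m\theta/m)\,T^m \le -C|T|^m$ for large $|T|$; in the odd case it is asymptotic to $-T^{m-2}\cos(m+1)\theta$, giving $\log|E(Te^{i\theta}, m)| \sim -(\cos(m+1)\theta/(m-1))\,T^{m-1} \le -C|T|^{m-1}$. Choosing $T_0$ large enough so that this leading-order behaviour dominates any bounded error terms yields the stated estimate. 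There is no substantial obstacle here: the crux is the derivative identity $h'(z) = -z^m/(1-z)$, which reduces the lemma to an elementary sign analysis of a rational function on the real line.
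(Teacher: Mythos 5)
Your proposal is correct and follows essentially the same route as the paper: the same explicit angles (the paper's family $\theta=\frac{(4k-1)\pi}{2(m+1)}$ or $\frac{(4k-1)\pi}{2m}$ with your choice being $k=1$), the same key identity $\frac{d}{dT}\log|E(Te^{i\theta},m)|=\frac{T^{m+1}\cos m\theta - T^m\cos(m+1)\theta}{|1-Te^{i\theta}|^2}$ with the same sign analysis giving \eqref{E-bound}. The only cosmetic difference is that for \eqref{logE-bound} you integrate the derivative asymptotically, whereas the paper expands $\log|E(Te^{i\theta},m)|$ directly to get the leading terms $\frac{T^m\cos m\theta}{m}+\frac{T^{m-1}\cos(m-1)\theta}{m-1}+O(|T|^{m-2})$; via $\cos(m-1)\theta=-\cos(m+1)\theta$ when $\cos m\theta=0$, these give the same bound.
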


\begin{proof}
First, for $m\ge 2$ and~$m$ even, and any angle of the form
\[
\theta=\frac{(4k-1)\pi}{2(m+1)},\quad k=1,2,\ldots, \tfrac12 m,
\]
we have
\[
(m+1)\theta=2k\pi-\frac{\pi}{2} \quad\text{and}\quad
m\theta= \left(2k\pi-\frac{\pi}{2}\right) \frac{m}{m+1}\in \left(2k\pi-\frac{3\pi}{2},2k\pi-\frac{\pi}{2}\right).
\]
The even case of \eqref{theta-choice} follows from this, with~$m$ choices of the angle $\theta\in [0,2\pi]$, by adding~$\pi$ to each of the $\tfrac12 m$ choices above.

The odd case of \eqref{theta-choice} follows similarly with
\[
\theta=\frac{(4k-1)\pi}{2m},\quad k=1,2,\ldots, \tfrac12(m-1),
\]
giving $m-1$ choices of $\theta$, by adding~$\pi$ to each of these $\tfrac12 (m-1)$ choices. It is helpful in the odd case to note that when $\cos m\theta=0$, the conditions $\cos(m-1)\theta<0$ and $\cos(m+1)\theta>0$ are equivalent.
%We can choose $\theta$ to satisfy \eqref{theta-choice} by taking, for example,
%\[
%\theta = \begin{cases}
%\frac{3\pi}{2(m+1)}, &\mbox{if } m \mbox{ is even}, \\
%\frac{3\pi}{2m}, &\mbox{if } m \mbox{ is odd}.
%\end{cases}
%\]
%or
%\[ \theta = \begin{cases} \frac{\pi}{2} \pm \frac{\pi}{m+1}, &\mbox{if } m\equiv 0, \\
%\frac{\pi}{2} \pm \frac{\pi}{m}, &\mbox{if } m\equiv 1, \\
%\frac{\pi}{2}, &\mbox{if } m\equiv 2, 3, \end{cases} \mod 4. \]

For any integer $m\ge 2$, such an angle $\theta$, and $T\in \R$, we have
\begin{equation}\label{log|E|}
  \log |E(Te^{i\theta},m)| = \Real\left(Te^{i\theta} + \frac{(Te^{i\theta})^2}{2} + \cdots + \frac{(Te^{i\theta})^m}{m}\right) + \log|1-Te^{i\theta}|.
\end{equation}
Now,
\begin{eqnarray*}
  \frac{d}{dT} \log |E(Te^{i\theta},m)| &=& \Real\left(e^{i\theta}\left(\frac{(Te^{i\theta})^m -1}{Te^{i\theta}-1}\right)\right) + \frac{T-\cos\theta}{(1-T\cos\theta)^2+T^2\sin^2\theta} \\
   &=& \frac{T^{m+1}\cos m\theta - T^m\cos(m+1)\theta}{(1-T\cos\theta)^2+T^2\sin^2\theta}\,.
\end{eqnarray*}
By our choice of $\theta$, this last expression is positive when $T<0$ and is negative when $T>0$. It follows that $\log |E(Te^{i\theta},m)| \le \log |E(0,m)| = 0$ for all $T\in \R$.

We also see from \eqref{log|E|} that, as $|T|\to\infty$,
\[ \log |E(Te^{i\theta},m)| = \frac{T^m\cos m\theta}{m} + \frac{T^{m-1}\cos (m-1)\theta}{m-1} + O(|T|^{m-2}). \]
The final claim of the lemma follows, again using our choice of $\theta$.
\end{proof}

\begin{proof}[Proof of Theorem~\ref{main2}]
We first write~$f$ in the form \eqref{Hadamard}, where the zeros $a_k$ are real and~$Q$ is a polynomial of degree $d_Q$. The proof when~$f$ has a finite number of zeros is a simpler version of the proof when $f$ has infinitely many zeros, so we assume the latter.

We then write
\[
P(z)=\prod_{k=1}^\infty E(z/a_k, m),\quad\text{so}\quad f(z)=z^ne^{Q(z)}P(z),
\]
and we recall that~$m$ is the least integer for which $\sum |a_k|^{-(m+1)}$ is convergent. We also recall from \eqref{genus} that the genus of~$f$ is $\max\{m, d_Q\}$.

First we prove part~(a). The proof splits into two cases depending on whether or not $d_Q$ is greater than~$m$. In some sense, when $m\ge d_Q$ the growth/decay of $P(z)$ dominates that of $e^{Q(z)}$, while the reverse is true when $m<d_Q$.

Suppose first that $m\ge d_Q$. Then $m\ge 2$ by the hypothesis on the genus of~$f$.

%Second, we prove a new result that overlaps with Theorem~\ref{EFH1.2}; this has a stronger hypothesis and more precise conclusion, and a fairly short direct proof.
%\begin{theorem}\label{minmodray}
%Let~$f$ be a {\tef} of finite order with only real zeros and genus at least~$2$. Then there exists $\theta\in [0,2\pi]$ such that
%\[
%f(re^{i\theta})\to 0 \;\text{ as } r\to\infty.
%\]
%Thus $m(r,f)\to 0$ as $r\to\infty$, so $f$ does not satisfy \eqref{minmodprop}.
%\end{theorem}

Let~$\theta$ be as in Lemma~\ref{lemma:E}. Then there exists $\theta_0\in\{\theta, \theta+\pi\}$ and $A>0$ such that for $z_0=re^{i\theta_0}$ with $r>1$, we have
\[
\log |z_0^ne^{Q(z_0)}| \le \begin{cases} Ar^m, &\mbox{if $m$ is even,} \\
Ar^{m-1}, &\mbox{if $m$ is odd.} \end{cases}
\]

To see that this holds when $m$ is odd, note that $\Real(Q(re^{i\theta_0}))$ is a polynomial in~$r$ of degree at most~$m$. If this degree equals~$m$, then $\theta_0\in\{\theta,\theta+\pi\}$ can be chosen to make the leading coefficient of $\Real(Q(re^{i\theta_0}))$ negative, so in this case $\log |z_0^ne^{Q(z_0)}|<0$ for large~$r$.

Next consider $\log |P(z_0)|$. Let~$C$ and $T_0$ be as in Lemma~\ref{lemma:E} and write $\alpha=1/T_0$. By \eqref{E-bound} and \eqref{logE-bound},
\begin{eqnarray*}
  \log|P(z_0)| &=& \sum_{k=1}^\infty \log\left|E\left(\frac{re^{i\theta_0}}{a_k},m\right)\right|  \\
   &\le& \sum_{|a_k|<\alpha r} \log\left|E\left(\frac{re^{i\theta_0}}{a_k},m\right)\right| \le
   \begin{cases} \ \ -Cr^m {\displaystyle\sum_{|a_k|<\alpha r} |a_k|^{-m}}, &\mbox{if $m$ is even,} \vspace{2mm}\\
-Cr^{m-1} {\displaystyle\sum_{|a_k|<\alpha r} |a_k|^{-(m-1)}}, &\mbox{if $m$ is odd.} \end{cases}
\end{eqnarray*}
Since these two sums diverge as $r\to\infty$, it follows from the estimates above that
\[ \log |f(re^{i\theta_0})| = \log |z_0^ne^{Q(z_0)}| + \log |P(z_0)| \to -\infty \; \mbox{ as } r\to\infty,\]
as required.

Now suppose instead that $d_Q\ge m+1$. We first observe that
\begin{equation}\label{log|P|}
  \log|P(z)| = o(|z|^{m+1}) \; \mbox{ as } z\to\infty.
\end{equation}

%By \cite[Theorem~1.11]{MF}, the order $\rho(P)$ of $P$ is equal to the order of the counting function~$n(r,P)$, where $n(r,P)$ is the number of the $a_k$ such that $|a_k|\le r$; that is,
%\[
%\rho(P)=\limsup_{r\to\infty}\frac{\log n(r,P)}{\log r}.
%\]
%We always have the inequality $m \le \rho(P) \le m+1$ (see, for example, \cite[\S1.10.3]{MF}) and in the case that $\rho(P)<m+1$ we immediately obtain \eqref{log|P|}. In the remaining case that $\rho(P)=m+1$,

The proof of \eqref{log|P|} depends only on the fact that $\sum |a_k|^{-(m+1)}<\infty$, and not on the fact that these zeros are real. Indeed, this convergence implies (see \cite[p.17 and Lemma~1.4]{MF}) that $n(r,P)$ is of at most order~$m+1$ convergence class; that is,
\[
\int_0^{\infty}\frac{n(t,P)}{t^{m+2}}\,dt<\infty,\;\text{ so }\; n(r,P)=o(r^{m+1})\;\text{ as }r\to\infty.
\]
The estimate~\eqref{log|P|} then follows from \cite[Theorem~1.11]{MF} (see in particular the antepenultimate sentence of its proof) or alternatively  from \cite[p.~233,~(3.3)]{N}.

Since~$Q$ is a polynomial, there exist $\theta\in [0,2\pi]$ and $c=c(\theta)>0$ such that
\[
|e^{Q(re^{i\theta})}| < \exp(-cr^{d_Q}),\quad \text{for sufficiently large } r>0.
\]
In fact an estimate of this type holds for all $\theta$ in a union of open subintervals of $[0,2\pi]$ of total length~$\pi$. We conclude, using \eqref{log|P|} and the assumption that $d_Q\ge m+1$, that for such~$\theta$ we have
\[ |f(re^{i\theta})| \le r^n \exp((-c+o(1))r^{m+1}) \to 0\;\text{ as } r\to \infty, \]
as required.

The proof of part~(b) also involves two cases.

First suppose that $m\ge 2$. Then we can simply apply Lemma~\ref{EFH1.2} to deduce that~0 is a deficient value of~$f$.

Next suppose that $m\le 1$. Then $d_Q\ge 2$ by our hypothesis about the genus, so  $d_Q\ge m+1$. Now we argue as in the proof of \eqref{log|P|} to deduce that
\[
n(r,f)=o(r^{m+1})\;\text{ as }r\to\infty,\quad\text{so}\quad N(r)=o(r^2)\;\text{ as }r\to\infty.
\]
Since $d_Q\ge 2$, the function~$f$ has order at least~2 and, moreover,
\[
\liminf_{r\to\infty} \frac{T(r)}{r^2}>0.
\]
We deduce that $N(r)=o(T(r))$ as $r\to\infty$, and hence
\begin{equation}\label{defect1}
\delta(0,f)= 1-\limsup_{r\to\infty}\frac{N(r)}{T(r)}=1.
\end{equation}
Therefore~0 is again a deficient value, in this case with defect~1.

This completes the proof of Theorem~\ref{main2}.
\end{proof}

We conclude this section by mentioning another family of {\tef}s for which we can show that $m(r) \to 0$ as $r \to \infty$,  and hence that property \eqref{minmodprop} does not hold.

\begin{theorem}\label{inf-order}
Let~$f$ be a {\tef} of infinite order, with zeros $a_n$, $n\in \N$, such that
\begin{equation}\label{zeros-order}
\sum_{n=1}^{\infty}\frac{1}{|a_n|^\xi} < \infty,
\end{equation}
for some $\xi\in (0,\infty)$. Then $\delta(0,f)=1$, so $m(r)\to 0$ as $r\to\infty$, and hence~$f$ does not satisfy~\eqref{minmodprop}.
\end{theorem}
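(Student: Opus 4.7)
My approach exploits the Weierstrass factorization to separate the finite-order ``zero part'' of $f$ from the transcendental exponential factor that produces its infinite order. I would first write $f(z) = z^k\, e^{g(z)}\, P(z)$, where $P(z) = \prod_n E(z/a_n, m)$ is the canonical product with $m$ the smallest integer making the product converge. By hypothesis $\sum |a_n|^{-\xi}<\infty$, so we may take $m \le \xi$, and $P$ has order at most $\xi$, giving $T(r,P) = O(r^{\xi+\eps})$ for every $\eps>0$. Since $f$ has infinite order while $z^k$ and $P$ have finite order, $e^g$ must have infinite order, which forces $g$ to be transcendental entire.

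Next I would estimate both $N(r,0,f)$ and $T(r,f)$. For the counting function, the standard tail estimate
\[
n(r,0,f) - n(R_0,0,f) \le r^\xi \sum_{R_0 < |a_n| \le r} |a_n|^{-\xi},
\]
together with the tail-convergence of $\sum |a_n|^{-\xi}$, yields $n(r,0,f) = o(r^\xi)$ and hence $N(r,0,f) = o(r^\xi)$. For the characteristic, writing $e^g = f/(z^k P)$ and applying the standard bound $T(r, h_1 h_2) \le T(r,h_1) + T(r,h_2) + O(1)$ gives
\[
T(r,f) \ge T(r,e^g) - T(r,z^k P) - O(1) = T(r,e^g) - O(r^{\xi+\eps}).
\]

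The crux of the argument is to show that $T(r,e^g)$ grows faster than every polynomial in $r$. Because $g$ is transcendental entire, its Taylor coefficients $c_n$ are nonzero for arbitrarily large $n$, so $M(r,g) \ge |c_n|\, r^n$ forces $M(r,g)/r^K \to \infty$ as $r\to\infty$ for every $K>0$. The Borel--Carath\'eodory inequality then gives
\[
\log M(r,e^g) = \max_\theta \Real g(re^{i\theta}) \ge \tfrac12 M(r/2, g) - O(1),
\]
and the classical estimate $\log M(\rho,h) \le 3T(2\rho,h)$ applied with $h=e^g$ and $\rho = r/2$ produces $T(r,e^g) \ge \tfrac13 \log M(r/2,e^g) - O(1)$. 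Chaining these estimates gives $T(r,e^g)/r^K \to \infty$ for every $K$; taking $K = \xi+1$ shows that $T(r,f)/r^{\xi+1} \to \infty$ as well.

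Combining with $N(r,0,f) = o(r^\xi)$ now yields
\[
\frac{N(r,0,f)}{T(r,f)} \le \frac{o(r^\xi)}{r^{\xi+1}} \cdot \frac{r^{\xi+1}}{T(r,f)} \longrightarrow 0,
\]
so $\delta(0,f) = 1 - \limsup_{r\to\infty} N(r,0,f)/T(r,f) = 1$. The remaining conclusions, namely $m(r)\to 0$ and the failure of \eqref{minmodprop}, then follow from the discussion just after \eqref{defect}. I expect the main (mild) obstacle to be Nevanlinna bookkeeping: one must verify that the $O(r^{\xi+\eps})$ error terms coming from the finite-order factor $z^k P$ are genuinely negligible against the super-polynomial growth of $T(r,e^g)$, and to observe that the Borel--Carath\'eodory step requires $g$ only to be transcendental entire -- infinite order of $f$ is used solely to secure this property, not any stronger regularity of $g$.
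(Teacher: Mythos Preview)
Your proof is correct and follows essentially the same route as the paper: factor $f=z^k e^{g}P$ with $P$ the finite-order canonical product, observe that $g$ must be transcendental, show $N(r)$ has polynomial growth while $T(r,f)$ grows super-polynomially, and conclude $\delta(0,f)=1$. The only difference is that where the paper cites Kobayashi to obtain that $f$ has infinite lower order, you supply a direct Borel--Carath\'eodory argument for the super-polynomial growth of $T(r,e^{g})$; this makes your version self-contained but otherwise identical in strategy.
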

\begin{proof}
It follows from the argument used to prove \eqref{log|P|} that $N(r)$ has finite order. On the other hand~$f$ has infinite order so we can write
\[
f(z)=z^ne^{h(z)}P(z),
\]
where $n\ge 0$, $h$ is a {\tef}, and~$P$ is the Hadamard product associated with~$f$, which has finite order. It follows that~$f$ has infinite lower order; see \cite[Proof of Theorem~3]{tK76}, for example. Hence \eqref{defect1} holds, so $\delta(0,f)=1$ and hence $m(r) \to 0$ as $r \to \infty$ by the reasoning following the statement of Theorem~\ref{main2}.
\end{proof}

In view of Theorems~\ref{main2} and~\ref{inf-order}, it is natural to ask whether it is the case that \eqref{minmodprop} fails to hold if~$f$ is {\it any} {\tef} of infinite order with only real zeros. For a general {\tef}~$f$ of infinite order with its zeros lying on a finite number of rays from~0, Miles \cite[Theorem~1]{jM79} has proved that there is a set $E\subset [0,\infty)$ of zero logarithmic density such that
\[
\lim_{r\to\infty, r\notin E}\frac{N(r)}{T(r)}=0,\quad\text{so } \lim_{r\to\infty, r\notin E} m(r) = 0.
\]
On its own, however, this property is not sufficient to show that \eqref{minmodprop} fails to hold.
%
%Finally we note that if~$f$ has its zeros only on the positive real axis and the genus of its associated Hadamard product lies in $[1,\infty)$, then it follows that
%\[
%m(r,f)\to 0\;\text{ as } r\to \infty.
%\]
%To see this, note that the function $g(z)=f(z^2)$ has its zeros only on the real axis and the genus of its associated Hadamard product lies in $[2,\infty)$. Hence, by Theorem~\ref{EFH1.2} and the argument following that result, we have
%\[
%m(r,g)\to 0 \;\text {as } r\to\infty,\quad \text{so}\quad m(r,f)\to 0 \;\text {as } r\to\infty.
%\]
%Moreover, if~$f$ is also real and of finite order, then~$g$ has limit~0 along at least two rays to~$\infty$, by Theorem~\ref{minmodray}, so $f$ has limit~0 along at least one ray to~$\infty$.

\section{Functions with order in the interval $[1/2,2]$}
\label{1/2to2}
\setcounter{equation}{0}
We have seen that if~$f$ is any {\tef} of order less than~$1/2$, then \eqref{minmodprop} holds, and if~$f$ is a finite order function with real zeros of genus at least~2 (which includes the case that $\rho(f)>2$), then \eqref{minmodprop} does not hold. In this section, we start by giving examples of real {\tef}s with all their zeros on the positive real axis, all of genus~0 and having all possible orders in the interval $[1/2,1]$. For each order in this interval, we shall give one example that does satisfy \eqref{minmodprop} and another example that does not.

We can then adapt these examples to construct examples of functions of genus~1 with only real zeros. Indeed, for real {\tef}s~$f$ with only positive zeros, of genus~0 and order $\rho$ say, the function $g(z)=f(z^2)$ is real with all real zeros, of genus~1 and having order $2\rho$, and \eqref{minmodprop} holds for~$g$ if and only if it holds for~$f$. Therefore, for each possible order in $[1,2]$, we obtain one example of this type that does satisfy \eqref{minmodprop} and another example that does not.

%In this section we show that the condition \eqref{minmodprop} holds for many real {\tef}s with real zeros and genus at most~$1$, of all orders in the interval $[1/2,2]$, but \eqref{minmodprop} also fails for many such functions. First we concentrate on examples of real {\tef}s with their zeros on the positive real axis.

First, for order $1/2$ the following examples were mentioned in \cite[Example~8.4]{ORS17}:
\begin{itemize}
\item property (\ref{minmodprop}) does not hold for the function $ f(z) = \cos \sqrt z $,
\item property (\ref{minmodprop}) holds for the functions $ g(z) = 2z \cos \sqrt z $.
\end{itemize}
Both these functions are real and have their zeros on the positive real axis, and are of order $1/2$ and genus~$0$.

Next, we give functions of genus~0 and all orders in $(1/2,1)$ for which \eqref{minmodprop} does {\it not} hold. Consider the family of functions
\begin{equation}\label{Hardy}
f_{\sigma}(z)=\prod_{n=1}^{\infty}\left(1-\frac{z}{n^{\sigma}}\right),
\end{equation}
where $\sigma=1/\rho$ and $\frac12<\rho<1$. Then $f_{\sigma}$ is a real {\tef} of genus 0 and order $\rho$, with zeros at $n^{\sigma}$, $n\in\N$. Hardy \cite{gH05} showed that
\[
f_{\sigma}(z)\sim \frac{2}{\sqrt{2\pi z}}\sin(\pi z^{\rho})\exp(\pi\cot(\pi\rho)z^{\rho}),
\]
as~$z$ tends to $\infty$ within a domain that contains the positive real axis. In particular, since $\cot(\pi\rho)<0$ for $\frac12<\rho<1$, we have $m(r,f_{\sigma})\to 0$ as $r\to \infty$, so \eqref{minmodprop} fails for these functions.

For an example of a real {\tef}~$f$ of order~1 and genus~$0$ with only positive zeros for which \eqref{minmodprop} fails, we note that Lindel\"of \cite{eL02} showed that the functions
\[
f_{\alpha}(z)=\prod_{n=1}^{\infty}\left(1-\frac{z}{n(\log n)^{\alpha}}\right), \quad\text{where } 1<\alpha<2,
\]
satisfy
\[
f_{\alpha}(z)=\exp\left(\frac{1+o(1)}{1-\alpha}z(\log(-z))^{1-\alpha}\right),
\]
as $z$ tends to $\infty$ within any set of the form $\{z:|\arg z|<\pi-\delta\}$, $\delta>0$. It is easy to deduce from this estimate that each such $f_{\alpha}$, $1<\alpha<2$, is bounded on all rays of the form $\{z:\arg z=\theta\}$, where $\theta\in (0,\pi/2)$. Hence $m(r,f_{\alpha})$ is bounded, so \eqref{minmodprop} fails for these functions.

The zeros of all the functions considered so far in this section are distributed very evenly on the positive real axis. With a more uneven distribution of positive zeros, we can construct real {\tef}s given by infinite products for which \eqref{minmodprop} {\it does} hold.

In the proof of the following result we show that \eqref{minmodprop} holds for the constructed function~$f$ by again using the fact that \eqref{minmodprop} is equivalent to the existence of $R=R(f)>0$ such that
\begin{equation}\label{tildem}
\tilde m(r) = \max_{0\leq s \leq r} m(s) >r, \quad\text{for }r\ge R;
\end{equation}
see Lemma~\ref{fullequiv}.

\begin{theorem}\label{dan1}
Given any $\rho$, $0<\rho\le 1$, we can construct a function of the form
\begin{equation}\label{prodf}
f(z)=\prod_{k=1}^{\infty}\left(1-\frac{z}{a_k}\right)^{m_k},
\end{equation}
where $a_k>0$ and $m_k\in\N$ for $k\in\N$, such that~$f$ has order $\rho$ and genus~$0$, and \eqref{minmodprop} holds.
\end{theorem}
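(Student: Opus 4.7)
The plan is to construct $f$ by placing clusters of positive real zeros at a rapidly growing sequence of radii $R_k$, with multiplicities $m_k$ chosen so that the zero counting function has growth rate $r^{\rho}$ (giving order $\rho$) while $\sum_k m_k/R_k < \infty$ (giving genus~$0$). The large gaps between consecutive clusters will then force $m(r)$ to be extremely large at suitably chosen radii, from which \eqref{minmodprop} will follow via Lemma~\ref{fullequiv}.

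Concretely, I would take $R_k := \exp(10^k)$ and $N_k := \lfloor R_k^{\rho}/(\log R_k)^{2}\rfloor$, and then set $m_k := N_k - N_{k-1} \in \N$ for all sufficiently large~$k$ (any small initial segment can be adjusted by hand). The convergence exponent of the zero sequence $\{R_k\}$ listed with multiplicity is exactly $\rho$: for $\xi > \rho$ the series $\sum_k m_k/R_k^{\xi}$ converges because its terms decay like $R_k^{\rho - \xi}/(\log R_k)^2$ with $R_k$ super-exponential, while for $\xi < \rho$ it diverges. The function $f$ has genus~$0$ because $\sum_k m_k/R_k \le \sum_k R_k^{\rho - 1}/(\log R_k)^2 < \infty$ for every $\rho \in (0,1]$; the logarithmic denominator is needed precisely at the endpoint $\rho=1$, where $R_k^{\rho-1}=1$. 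By the classical fact that the order of a canonical product equals the convergence exponent of its zeros, $f$ has order~$\rho$.

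Next I would observe that, because all zeros of $f$ lie on the positive real axis, $m(r) = |f(r)|$ for every $r>0$ that is not a zero of~$f$: for each $j$, $|z-R_j|^2 = r^2 + R_j^2 - 2rR_j\cos\theta$ is a strictly increasing function of $\theta \in [0,\pi]$ for $z=re^{i\theta}$, hence so is $|f(z)|$, so the minimum on $|z|=r$ is attained at $z=r$. I would then estimate $m(R_k^2)$, noting that $R_k^2$ lies well inside the gap $(R_k, R_{k+1})$ since $R_{k+1} = R_k^{10}$. Writing
\[
\log m(R_k^2) = \sum_{j \le k} m_j \log\!\bigl(R_k^2/R_j - 1\bigr) + \sum_{j > k} m_j \log\!\bigl(1 - R_k^2/R_j\bigr),
\]
the super-exponential growth of $R_k$ makes $\sum_{j \le k} m_j \log R_j$ dominated by its final term $m_k \log R_k \sim N_k \log R_k$, so the first sum is at least $\tfrac{1}{2} N_k \log R_k$ for all large $k$. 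The second sum is $o(1)$ since $R_k^2/R_j \le R_k^{-8}$ for $j > k$ and $\sum_j m_j/R_j$ converges. Hence $m(R_k^2) \ge R_k^{N_k/2}$ for all large~$k$.

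Finally, I would appeal to Lemma~\ref{fullequiv}(c): since $\tilde m$ is nondecreasing, for $r \in [R_k^2, R_{k+1}^2]$ we have $\tilde m(r) \ge m(R_k^2) \ge R_k^{N_k/2}$ while $r \le R_{k+1}^2 = R_k^{20}$; since $N_k \to \infty$, the inequality $N_k/2 > 20$ eventually holds, so $\tilde m(r) > r$ for all large~$r$, giving \eqref{minmodprop}. The main obstacle is the endpoint $\rho = 1$: genus~$0$ forces $\sum m_k/R_k < \infty$, preventing the simplest choice $N_k \sim R_k$, so the mild logarithmic decrement in the definition of $N_k$ is essential, and one must verify that it does not lower the convergence exponent below~$1$.
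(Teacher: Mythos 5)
Your overall strategy is the same as the paper's (sparse clusters of positive real zeros, a lower bound for the minimum modulus at checkpoint radii inside the gaps, then Lemma~\ref{fullequiv}(c)), but there is a genuine gap in the key estimate: the claim that the second sum is $o(1)$ at $r=R_k^2$ is false when $\rho$ is close to $1$. The dominant tail term is $j=k+1$, and since $R_k^2/R_{k+1}=R_k^{-8}$ is small,
\[
m_{k+1}\,\bigl|\log\bigl(1-R_k^2/R_{k+1}\bigr)\bigr|\;\approx\;\frac{R_{k+1}^{\rho}}{(\log R_{k+1})^2}\cdot\frac{R_k^2}{R_{k+1}}\;=\;\frac{R_k^{10\rho-8}}{10^{2k+2}},
\]
which tends to infinity super-exponentially for every $\rho>4/5$; the convergence of $\sum_j m_j/R_j$ does not save you because you multiply the tail by $R_k^2$. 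Worse, for $\rho>8/9$ (in particular at $\rho=1$) this negative contribution swamps the positive part of $\log|f(R_k^2)|$, which is at most about $2N_k\log R_k\approx 2R_k^{\rho}/\log R_k$, so $m(R_k^2)$ is in fact extremely small and the asserted bound $m(R_k^2)\ge R_k^{N_k/2}$ fails outright. Thus the real obstacle at the top of the range is not the one you flag (the logarithmic decrement does keep the convergence exponent equal to $1$, and the order and genus claims are fine); it is that with the $\rho$-independent gap law $R_{k+1}=R_k^{10}$, the pull of the next, much heavier cluster is already dominant at the radius $R_k^2$ where you evaluate.

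The paper avoids exactly this by letting the gaps depend on $\rho$: with $a_{k+1}=(12a_k^{\rho})^{1/(1-\rho)}$ for $\rho<1$ (and a separate modification, $n(a_k)=[a_k^{1-1/k}]$ with correspondingly faster gaps, at $\rho=1$) the checkpoint $r_k=3a_k$ satisfies $\sum_{j>k}m_jr_k/a_j\le\tfrac12$, so the tail really is harmless there. Your construction could be repaired without changing the zeros by moving the checkpoint much closer to the cluster, say $s_k=3R_k$: then the positive part is about $N_k\log 2$ while the tail is about $3R_k\sum_{j>k}m_j/R_j\lesssim R_k^{10\rho-9}10^{-2k}$, which is negligible for $\rho<1$ and smaller by a large constant factor even at $\rho=1$, and $\log m(3R_k)$ still vastly exceeds $\log R_{k+1}^2$, so the $\tilde m(r)>r$ argument goes through. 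But as written, with checkpoints at $R_k^2$ and the unconditional ``$o(1)$'' claim, the proof does not cover $\rho\in(8/9,1]$.
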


\begin{proof} We first assume that $0<\rho<1$. We shall construct the sequence of zeros $(a_k)$ to be strictly increasing, with multiplicity $(m_k)$, and have several other properties. Let $n(r)$ be the number of such zeros in $\{z:|z|\le r\}$, counted according to multiplicity. One condition we require is that the sequence $(m_k)$ is chosen in such a way that
\[
n(a_k)=\sum_{j=1}^{k}m_j=[a_k^{\rho}],\quad k\in\N,
\]
where $[.]$ denotes the integer part function. This ensures that the infinite product in \eqref{prodf} is convergent and~$f$ has order $\rho$ by \cite[Theorem~1.11 and Lemma~1.4]{MF}.

We now describe how to choose $(a_k)$. First we estimate $m(r_k) = |f(r_k)|$, where $r_k=3a_k$. We have
\begin{equation}\label{min-rk1}
\left|\prod_{j=1}^{k}\left(1-\frac{r_k}{a_j}\right)^{m_j}\right|\ge \prod_{j=1}^{k}2^{m_j}=2^{[a_k^{\rho}]},
\end{equation}
 and we shall choose the sequence~$(a_k)$ so large that
\begin{equation}\label{min-rk2}
\left|\prod_{j=k+1}^{\infty}\left(1-\frac{r_k}{a_j}\right)^{m_j}\right|\ge \frac12.
\end{equation}
This can be achieved by choosing~$(a_k)$ such that
\begin{equation}\label{min-rk3}
\sum_{j=k+1}^{\infty}\frac{m_jr_k}{a_j}\le\frac12.
\end{equation}
Since $m_k \le a_k^{\rho}$ for $k\in\N$, we have
\[
\sum_{j=k+1}^{\infty}\frac{m_jr_k}{a_j}\le r_k\sum_{j=k+1}^{\infty}a_j^{\rho-1}.
\]
If we take
\begin{equation}\label{choose-ak}
a_0=1, \quad a_{k+1}=(12 a_k^{\rho})^{1/(1-\rho)},\quad k=0,1,\ldots,
\end{equation}
then
\[
a_j\ge \left(12a_k^{\rho}\right)^{(j-k)/(1-\rho)}a_k=T_k^{(j-k)/(1-\rho)}a_k,\qfor j\ge k+1,
\]
say, where $T_k=12a_k^{\rho}$. Hence
\[
r_k\sum_{j=k+1}^{\infty}a_j^{\rho-1}\le r_k a_k^{\rho-1}\sum_{j=k+1}^{\infty}\left(\frac{1}{T_k}\right)^{j-k}=\frac{3a_k^{\rho}}{T_k-1}\le \frac12,
\]
which proves \eqref{min-rk3}, and also shows that~$f$ has genus~$0$. On combining \eqref{min-rk1} and \eqref{min-rk2}, we obtain
\[
m(r_k)=|f(r_k)|\ge 2^{[a_k^{\rho}]-1},\qfor k\ge 1.
\]
Finally, we prove that \eqref{tildem} holds. Given~$r$ we choose~$k$ such that $r_k\le r<r_{k+1}$. Then
\[
\tilde m(r)\ge m(r_k) \ge 2^{[a_k^{\rho}]-1}>3 (12a_k^{\rho})^{1/(1-\rho)} =3a_{k+1}=r_{k+1}>r,
\]
provided that $k$ is sufficiently large, as required.

The proof when $\rho=1$ is a modification of the argument above in which~$f$ remains of the form \eqref{prodf} but we take
\[
n(a_k)=\sum_{j=1}^k m_j=[a_k^{1-\eps_k}],\quad\text{where } \eps_k=\frac1k,\qfor k\in\N,
\]
which implies that $f$ has order~1, and we replace \eqref{choose-ak} by
\[
a_0=1, \quad a_{k+1}=(12 a_k)^{1/\eps_{k+1}},\quad k=0,1,\ldots.
\]
Then $a_k\ge 12^k$ for $k\in \N$, so
\[
a_j^{\eps_j} \ge 12^{j-k}a_k \ge 4^{j-k} r_k,\qfor j\ge k+1,
\]
where $r_k=3a_k$ as before. It readily follows, by splitting the product as above, that
\[
m(r_k)=|f(r_k)|\ge 2^{[a_k^{1-\eps_k}]-1},\qfor k\ge 1.
\]
Hence, for $r_k\le r<r_{k+1}$ and $k$ sufficiently large,
\[
\tilde m(r)\ge m(r_k) \ge 2^{a_k^{1/2}}>3 (12a_k)^{1/\eps_{k+1}} =3a_{k+1}=r_{k+1}>r,
\]
since $a_k\ge 12^k$ for $k\in\N$.
\end{proof}


\begin{thebibliography}{99}


%\bibitem{AB} S. Axler, P. Bourdon and R. Wade, {\it Harmonic function theory}, Springer, 2000.

\bibitem{iB58} I. N.~Baker, Zusammensetzungen ganzer {F}unktionen, {\it Math. Z.}, 69 (1958), 121--163.

\bibitem{Ba63} P. D.~Barry, On a theorem of Besicovitch, {\it Quart. J. Math. Oxford (2)}, 14 (1963), 293--302.

\bibitem{BH99} W.\ Bergweiler and A.\ Hinkkanen, On semiconjugation of entire functions, {\it  Math.\ Proc.\ Cambridge Philos.\ Soc.},
126 (1999), 565--574.

%\bibitem{BRS11} W.\ Bergweiler, P.\ J.\ Rippon and G.\ M.\ Stallard, Multiply connected wandering domains of entire functions, {\it Proc. London Math. Soc.}, 107 (2013), 1261--1301.

%\bibitem{aB33} A. Beurling, \'Etudes sur un probl\`eme de majorisation, Th\`ese, Upsala, 1933.

%\bibitem{yD97} Y.~Domar, Entire functions of order $\le 1$ with bounds on both axes, {\it Ann. Acad. Sci. Fenn.}, 22 (1997), 339--348.

\bibitem{EFH} A.~Edrei, W. H. J.~Fuchs and S.~Hellerstein, Radial distribution and deficiencies of the values of a meromorphic function, {\it Pacific. J. Math.}, 11 (1961), 135--151.

\bibitem{EF} A.~Edrei and W. H. J.~Fuchs, On the maximum number of deficient values of certain classes of functions, {\it Air Force Technical Report} AFOSR TN, 60--402.

\bibitem{V} V. Evdoridou, Fatou's web, {\it Proc. Amer. Math. Soc.}, 144 (2016), 5227--5240.

\bibitem{gH05} G. H.~Hardy, On the function $P_{\rho}(x)$, {\it Quart. J. Math.}, 37 (1905), 146--72.

\bibitem{MF} W.~K.~Hayman, \emph{Meromorphic functions}, Oxford University Press, 1964.

%\bibitem{HK76} W.~K. Hayman and P.~B.~Kennedy, {\it Subharmonic functions, Volume 1}, London Mathematical Society Monographs, 9, Academic Press, London--New York, 1976.

\bibitem{wH89} W.~K.~Hayman, {\it Subharmonic functions, Volume 2}, London Mathematical Society Monographs, 20, Academic Press, London--New York, 1989.

%\bibitem{bK48} B.~Kjellberg, On certain integral and harmonic functions, Uppsala 1948.

\bibitem{tK76} T. Kobayashi, On the lower order of an entire function, {\it Kodai Math. Sem. Rep.}, 27 (1976), 484--495.

\bibitem{eL} E. Laguerre, Sur les fonctions du genre z\'ero et du genre un, {\it C. R. Acad. Sci. Paris}, 95 (1882), Oeuvres
1, 174--177.

%\bibitem{byL} B.~Y.~Levin, Lectures on entire functions, Translations of Mathematical Monographs 150, American Mathematical Society, 1996.

\bibitem{eL02} E. Lindel\"of, M\'emoire sur la th\'eorie des fonctions enti\`eres de genre fini, {\it Acta Soc. Sci. Fenn.}, 31 No. 1 (1902).

\bibitem{jM79} J. Miles, On entire functions of infinite order with radially distributed zeros, {\it Pacific. J. Math.}, 81 (1979), 131--157.

\bibitem{N} R. Nevanlinna, {\it Analytic functions}, Springer, 1970.

\bibitem{NRS} D. A. Nicks, P. J. Rippon and G. M. Stallard, Baker's conjecture for functions with real zeros, {\it Proc. London Math. Soc.}, 117 (2018), 100--124.

\bibitem{NRS18} D.~A.~Nicks, P.~J.~Rippon and G.~M.~Stallard, Iterating the minimum modulus: functions of order half, minimal type, Preprint.

\bibitem{ORS17} J.~W.~Osborne, P.~J.~Rippon and G.~M.~Stallard, The iterated minimum modulus and conjectures of Baker and Eremenko, to appear in {\it J. Anal. Math.}, arXiv:1609.06644.

\bibitem{gP} G. P\'olya, \"Uber Ann\"aherung durch Polynome mit lauter reellen Wurzeln, {\it Rend. Circ. Mat. Palermo}, 36 (1913), 279--295.

\bibitem{RS09} P. J. Rippon and G. M. Stallard, Functions of small growth with no unbounded wandering domains, {\it J. Anal. Math.}, 108 (2009), 61--86.

\bibitem{RS10a} P. J. Rippon and G. M. Stallard, Fast escaping points of entire functions, \textit{Proc. London Math. Soc.}, 105 (4) (2012), 787--820.

\bibitem{RS13a} P. J. Rippon and G. M. Stallard, Regularity and fast escaping points of entire functions, \textit{Int. Math. Res. Not.}, Oct 2014, Vol. 2014, Issue 19, 5203--5229.

\bibitem{RS13b} P. J. Rippon and G. M. Stallard, A sharp growth condition for a fast escaping \sw, \textit{Advances in Mathematics}, 244 (2013), 337--353.

\bibitem{RS13c} P. J. Rippon and G. M. Stallard, Baker's conjecture and Eremenko's conjecture for functions with negative zeros, {\it J. Anal. Math.}, 120 (2013), 291--309.

\bibitem{RRRS} G.\ Rottenfu{\ss}er, J. R\"uckert, L. Rempe and D.\ Schleicher, Dynamic rays of bounded-type entire functions, {\it Ann. of Math.}, 173 (2011), 77--125.

\bibitem{T} E.~C.~Titchmarsh, {\it The theory of functions,} Oxford University Press, 1939.

\end{thebibliography}
\end{document}